\providecommand{\keywords}[1]
{
  \small	
  \textbf{\textit{Keywords---}} #1
}
\newtheorem{remark}{Remark}
\newtheorem{theorem}{Theorem}[section]
\newtheorem{definition}{Definition}
\newtheorem{corollary}{Corollary}[theorem]
\title{Characterizing Spectral Properties of Bridge Graphs}
\author{Yixin Li}
\affil{Beijing 101 Middle School}
\date{August 2021}
\begin{document}
\maketitle
\begin{abstract}
The Bridge graph is a special type of graph which are constructed by connecting identical connected graphs with path graphs. We discuss different types of bridge graphs $B_{n\times l}^{m\times k}$ in this paper. In particular, we discuss the following: complete-type bridge graphs, star-type bridge graphs, and full binary tree bridge graphs. We also bound the second eigenvalues of the graph Laplacian of these graphs using methods from Spectral Graph Theory. In general, we prove that for general bridge graphs, $B_{n\times l}^2$, the second eigenvalue of the graph Laplacian should be between $0$ and $2$, inclusive. In the end, we talk about future work on infinite bridge graphs. We created definitions and found the related theorems to support our future work about infinite bridge graphs.
\end{abstract}

\keywords{Spectral Graph Theory,  Laplacian Operator, Bridge Graph, Eigenvalue}
\newpage
\tableofcontents
\newpage
\section{Introduction}
Spectral graph theory is the process of characterizing graphs by means of the eigenvalues and eigenvectors of the graph Laplacian. It connects graphs to matrices, and allows us to understand properties of graph using more analytic means. Recently, spectral graph theory has found application in machine learning and deep learning. In particular, there are many clustering algorithms based on spectral methods, like spectral clustering, that are more effective than tradition clustering methods like K-means. Many theoretical properties of these algorithms rely on bounding eigenvalues of the graph Laplacian. \newline

Research has already been done in extracting bounds of eigenvalues of special graphs such as complete graphs, path graphs, the binary tree, and so on. In this paper, we will be focusing on some special types of graphs which are constructed by connecting some identical connected graphs by a path or multiple edges, which we'll call Bridge graphs. Bridge graphs are constructed by using path graphs, $P_m$ with $n\geq 2$, and putting some identical graphs on each end of the path. \newline

We will then bound the second eigenvalues of the graph Laplacians of the graphs we discussed above. The second eigenvalues are the most important because the first eigenvalue of the graph Laplacian is always $0$. We'll use test vectors and Loewner partial ordering to approach this. At the end of this paper, we'll discuss  constructing infinite bridge graphs, which are constructed by connecting a countably infinite number of identical connected graphs using path graphs. We'll also discuss the general idea for bounding the spectrum of the generalized Laplacian operator.

\section{Basic Definitions}
The following definitions are from Dan Spielman\cite{b1}.Assume we have a graph $G$ with vertex set $V$ and edge set $E$. Assume that the number of vertices is $|V|=n$. We can label vertices to be $\{1,2,3,\dots,n-1,n\}$. 
\begin{definition}The adjacency matrix $\bold{M}$ of a weighted graph $G=(V,E, w)$ is defined as the matrix with the following entries
$$
\bold{M}(a,b) = 
\begin{dcases}
w_{a,b} \quad (a,b)\in E\\
0 \quad (a,b)\notin E
\end{dcases}
$$
When the graph is unweighted, $w(a) = 1$ for all $(a,b) \in E$.
\end{definition}
\begin{definition}
The degree of a vertex $a$ is the number of edges attached to it. For a weighted graph, the degree $d(a)$ of the vertex $a$ is the sum of the weights of the edges attached to it. 
\end{definition}

\begin{definition}
The degree matrix $\bold{D}$ of a graph $G=(V,E)$ is a diagonal matrix whose entries are given by
$$
\bold{D}(a,b) = 
\begin{dcases}
d(a)\quad &a=b\\
0 \quad &a\neq b
\end{dcases}
$$
\end{definition}

\begin{definition}
The graph laplacian $\bold{L}$ of a graph $G$ is defined to be
$$
\bold{L} =\bold{M}- \bold{D}.
$$
\end{definition}
\begin{definition}(Loewner partial order)
Let $G_1$ and $G_2$ be graphs each with $n$ vertices. Then for the graph Laplacians of $G_1$ and $G_2$, $L_{G_1}$ and $L_{G_2}$, we write $L_{G_1}  \succcurlyeq L_{G_2}$ if and only if $\bold{v}^TA\bold{v}\geq \bold{v}^TB\bold{v}$ for all vectors $\bold{v} \in \mathbb{R}^n$. The relation $\succcurlyeq$ above is called Loewner partial order. In this case, the graphs $G_1$ and $G_2$ also have relation $G_1 \succcurlyeq G_2$.
\end{definition}
\section{Basic Theorems}
The following theorems are from Dan Spielman\cite{b1}
\begin{theorem}
Assume we have a weighted graph $G=(V,E)$, for every edge $e = (a,b)$, let the weight be $w_{a,b}$. For a function $\bold{x}: V \to \mathbb{R}^n$, the quadratic form of the graph Laplacian is
$$
\bold{x}^T\bold{L}\bold{x}=\sum_{(a,b)\in E} \bold{w}_{a,b}(x(a)-x(b))^2  
$$
\end{theorem}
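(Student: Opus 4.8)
The plan is to establish the identity by expanding the scalar $\mathbf{x}^T\mathbf{L}\mathbf{x}$ into its defining double sum over matrix entries and then reorganizing that sum so that it is indexed by edges rather than by pairs of vertices. First I would write
$$\mathbf{x}^T\mathbf{L}\mathbf{x} = \sum_{a\in V}\sum_{b\in V} \mathbf{L}(a,b)\,x(a)x(b),$$
and substitute $\mathbf{L}=\mathbf{M}-\mathbf{D}$, keeping careful track of the sign (with this convention the right-hand side will appear with an overall sign that must be reconciled with the stated formula). This splits the expression into a diagonal part coming from $\mathbf{D}$ and an off-diagonal part coming from $\mathbf{M}$, using that $\mathbf{D}$ is diagonal and that $\mathbf{M}(a,b)=0$ whenever $(a,b)\notin E$.

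Next I would rewrite each part as a sum over the (unordered) edge set $E$. For the degree part I would use the definition $d(a)=\sum_{b:(a,b)\in E} w_{a,b}$ to observe that a single edge $\{a,b\}$ contributes the term $w_{a,b}x(a)^2$ to $d(a)x(a)^2$ and the term $w_{a,b}x(b)^2$ to $d(b)x(b)^2$; hence $\sum_{a\in V} d(a)x(a)^2 = \sum_{(a,b)\in E} w_{a,b}\bigl(x(a)^2+x(b)^2\bigr)$. For the adjacency part, symmetry of $\mathbf{M}$ means each edge $\{a,b\}$ is counted in both orders, giving $\sum_{a\neq b}\mathbf{M}(a,b)x(a)x(b) = \sum_{(a,b)\in E} 2w_{a,b}x(a)x(b)$.

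Finally I would combine the two sums edge by edge: the contribution of each edge $\{a,b\}$ becomes $w_{a,b}\bigl(x(a)^2 - 2x(a)x(b) + x(b)^2\bigr) = w_{a,b}(x(a)-x(b))^2$, and summing over $E$ yields the claimed identity. The one step needing genuine care — the main obstacle, though it is bookkeeping rather than a deep difficulty — is the passage from sums indexed by vertices and by ordered pairs to a single sum indexed by unordered edges, ensuring the factor of $2$ from the symmetry of $\mathbf{M}$ exactly matches the doubled degree contributions, and that the sign induced by the convention $\mathbf{L}=\mathbf{M}-\mathbf{D}$ is handled consistently throughout. The weighted case requires no extra effort since the weights $w_{a,b}$ are carried along unchanged, and the unweighted statement is simply the specialization $w_{a,b}=1$.
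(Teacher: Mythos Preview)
Your proposal is correct and is the standard argument for this identity. Note, however, that the paper does not actually prove this statement: it is listed among the ``Basic Theorems'' in Section~3 and attributed to Spielman~\cite{b1} without proof, so there is no argument in the paper to compare against.

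One point worth flagging, which you already noticed: the paper defines $\mathbf{L}=\mathbf{M}-\mathbf{D}$ (Definition~4), the opposite sign from the usual convention $\mathbf{D}-\mathbf{M}$. Carrying that sign through your computation literally would yield $\mathbf{x}^T\mathbf{L}\mathbf{x}=-\sum_{(a,b)\in E}w_{a,b}(x(a)-x(b))^2$, not the positive form stated in the theorem. This is an inconsistency in the paper itself (every later use of the quadratic form treats it as nonnegative), not a flaw in your argument; your remark that the sign ``must be reconciled'' is exactly right, and in practice one simply works with $\mathbf{D}-\mathbf{M}$.
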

\begin{theorem}
For a $n\times n$ symmetric matrix $\bold{A}$ with ordered eigenvalues $\lambda_1\le \lambda_2\le \dots\le\lambda_n$ and corresponding eigenvectors $\phi_1,\phi_2,\dots,\phi_n$ we have 
$$\phi_i=\min\limits_{\substack{(\bold{x},\phi_k)=0,\\1\le k \le i-1} }{\frac{\bold{x}^T \bold{L}\bold{x}}{\bold{x}^T\bold{x}}}
$$
with 
$$
\phi_i=\arg\min\limits_{\substack{(\bold{x},\phi_k)=0, \\1\le k \le i-1}} {\frac{\bold{x}^T \bold{L}\bold{x}}{\bold{x}^T\bold{x}}}.
$$
\end{theorem}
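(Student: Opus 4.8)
The plan is to reduce the statement to a diagonal computation via the spectral theorem. Since $\bold{A}$ is real and symmetric, it admits an orthonormal basis of eigenvectors $\phi_1,\dots,\phi_n$ with $\bold{A}\phi_j=\lambda_j\phi_j$. First I would expand an arbitrary nonzero test vector in this basis, writing $\bold{x}=\sum_{j=1}^{n} c_j\phi_j$ with $c_j=(\bold{x},\phi_j)$. Orthonormality then yields $\bold{x}^T\bold{x}=\sum_{j} c_j^{2}$ and $\bold{x}^T\bold{A}\bold{x}=\sum_{j}\lambda_j c_j^{2}$, so the Rayleigh quotient $\bold{x}^T\bold{A}\bold{x}/\bold{x}^T\bold{x}$ becomes a convex combination of the $\lambda_j$ with weights $c_j^{2}/\sum_k c_k^{2}$.

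Next I would impose the orthogonality constraints. The condition $(\bold{x},\phi_k)=0$ for $1\le k\le i-1$ is precisely $c_1=\dots=c_{i-1}=0$, so the feasible vectors are exactly those lying in $\mathrm{span}\{\phi_i,\dots,\phi_n\}$. For such $\bold{x}$ the quotient equals $\big(\sum_{j\ge i}\lambda_j c_j^{2}\big)\big/\big(\sum_{j\ge i} c_j^{2}\big)$, and since $\lambda_j\ge\lambda_i$ for every $j\ge i$, this is at least $\lambda_i$. Evaluating at $\bold{x}=\phi_i$ makes the quotient equal to $\lambda_i$ exactly, so the infimum is attained, its value is $\lambda_i$, and $\phi_i$ is a minimizer — which establishes both the $\min$ formula (read as $\lambda_i=\min\cdots$) and the $\arg\min$ formula.

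The step I expect to require the most care is the appeal to the spectral theorem, especially the existence of an orthonormal eigenbasis when some $\lambda_j$ are repeated; this is exactly what legitimizes expanding $\bold{x}$ in eigencoordinates, and I would either cite it as standard or note that each eigenspace can be given an orthonormal basis internally. A second, minor point is the interpretation of $\arg\min$: the minimizer is unique only up to sign when $\lambda_i$ is simple, and only up to a choice within the $\lambda_i$-eigenspace when it is multiple, so the statement should be read as ``$\phi_i$ is \emph{a} minimizer.'' Finally I would note that the case $i=1$ has no constraints and is just $\lambda_1=\min_{\bold{x}\neq 0}\bold{x}^T\bold{A}\bold{x}/\bold{x}^T\bold{x}$, so no separate induction is needed; the constrained cases follow by the same averaging argument restricted to the tail subspace.
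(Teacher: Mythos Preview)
Your argument is the standard and correct proof of the Courant--Fischer variational characterization: diagonalize via the spectral theorem, express the Rayleigh quotient as a weighted average of eigenvalues, and observe that the orthogonality constraints kill the first $i-1$ coordinates so the quotient is bounded below by $\lambda_i$, with equality at $\phi_i$. Your caveats about repeated eigenvalues and the non-uniqueness of the $\arg\min$ are also well taken.

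There is nothing to compare against, however: the paper does not prove this theorem. It is listed in Section~3 among the ``Basic Theorems'' that are simply quoted from Spielman~\cite{b1} and used as black boxes throughout. So your proposal is not matching or diverging from the paper's proof---it is supplying one where the paper gives none. If you intend to include it, it stands on its own; just be aware that in the paper's context this result is treated as background rather than something to be established.
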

\begin{theorem}
For a  graph $G=(V,E)$, with graph Laplacian $L_G$, ordered eigenvalues $\lambda_1\le \lambda_2\le \dots\le\lambda_n$, and corresponding eigenvectors $\phi_1,\phi_2,\dots,\phi_n$, we have $\lambda_1=0$ and $\phi_1=\bold{1}$ where $\bold{1} = (1, \ldots, 1)^{T}$.
\end{theorem}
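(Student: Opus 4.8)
The strategy is to read off both assertions directly from Theorems 3.1 and 3.2. First I would use Theorem 3.1 to record that for every $\mathbf{x}\in\mathbb{R}^n$,
$$\mathbf{x}^T L_G \mathbf{x}=\sum_{(a,b)\in E} w_{a,b}\,(x(a)-x(b))^2\ \ge\ 0,$$
since each summand is a non-negative weight times a square. Hence $L_G$ is positive semidefinite and all of its eigenvalues are non-negative; in particular $\lambda_1\ge 0$. This already gives one half of the claim, namely a lower bound on the bottom of the spectrum.

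Next I would exhibit $\mathbf{1}$ as a $0$-eigenvector. Substituting $\mathbf{x}=\mathbf{1}$ into the quadratic form of Theorem 3.1 gives $\mathbf{1}^T L_G \mathbf{1}=\sum_{(a,b)\in E} w_{a,b}\,(1-1)^2=0$. Because $L_G$ is symmetric and positive semidefinite, a vector whose quadratic form vanishes must lie in the kernel: diagonalizing $L_G=\sum_i \lambda_i \phi_i\phi_i^T$ with all $\lambda_i\ge 0$, the identity $\sum_i \lambda_i (\phi_i^T\mathbf{1})^2=0$ forces $\phi_i^T\mathbf{1}=0$ whenever $\lambda_i>0$, so $L_G\mathbf{1}=0$. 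Thus $0$ is an eigenvalue, and together with $\lambda_1\ge 0$ we get $\lambda_1=0$. For the eigenvector I would then apply Theorem 3.2 with $i=1$, where there are no orthogonality constraints: $\phi_1=\arg\min_{\mathbf{x}}\frac{\mathbf{x}^T L_G \mathbf{x}}{\mathbf{x}^T\mathbf{x}}$ and the minimum value equals $\lambda_1=0$, which $\mathbf{x}=\mathbf{1}$ attains, so $\mathbf{1}$ is a legitimate choice of $\phi_1$.

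The one genuine subtlety — and really the only obstacle — is \emph{uniqueness} of $\phi_1$: it is pinned down only up to scaling, and even that requires $G$ to be connected. I would close this gap by noting that $\mathbf{x}^T L_G \mathbf{x}=0$ forces $x(a)=x(b)$ across every edge, hence $\mathbf{x}$ is constant along every path; when $G$ is connected this makes $\mathbf{x}$ globally constant, so the $0$-eigenspace is exactly $\mathrm{span}(\mathbf{1})$ and $\phi_1=\mathbf{1}$ up to normalization. (If $G$ is disconnected the kernel is spanned by the indicator vectors of its components, but $\mathbf{1}$ is still one valid representative, so the statement holds as written.) I would also flag the harmless convention issue that the paper writes $\mathbf{L}=\mathbf{M}-\mathbf{D}$ while Theorem 3.1 uses the positive-semidefinite form $\mathbf{D}-\mathbf{M}$; I would work with the latter throughout so that the ordering $\lambda_1\le\cdots\le\lambda_n$ with $\lambda_1=0$ is consistent.
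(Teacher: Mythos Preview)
Your argument is correct and is the standard one. Note, however, that the paper does not actually supply its own proof of this statement: Theorem~3.3 sits in the ``Basic Theorems'' section, all of which are quoted from Spielman~\cite{b1} without proof. So there is nothing in the paper to compare your approach against; you have simply filled in the omitted justification. Your handling of the subtleties --- the positive-semidefiniteness via the quadratic form, the kernel computation, the connectedness caveat for uniqueness of $\phi_1$, and the sign convention $\mathbf{M}-\mathbf{D}$ versus $\mathbf{D}-\mathbf{M}$ --- is all appropriate and more careful than the paper itself.
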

\begin{theorem}
For a unweigted graph $G=(V,E)$ And $L_G$ is the graph Laplacian with ordered eigenvalues $0=\lambda_1\le \lambda_2\le \dots\le\lambda_n$. Then $G$ is connected if and only if $\lambda_2>0$
\end{theorem}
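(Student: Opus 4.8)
The plan is to deduce both directions from the quadratic form of the Laplacian stated above, together with the fact that $\lambda_1=0$ with eigenvector $\mathbf{1}$. First I would record that the quadratic form theorem gives $\mathbf{x}^T L_G \mathbf{x}=\sum_{(a,b)\in E}(x(a)-x(b))^2\ge 0$ for every $\mathbf{x}\in\mathbb{R}^n$, so $L_G$ is positive semidefinite and all of its eigenvalues are nonnegative. In particular $\lambda_2\ge 0$, so proving ``$\lambda_2>0$'' is the same as proving ``$\lambda_2\neq 0$'', i.e.\ that the eigenvalue $0$ has multiplicity exactly one.

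For the direction ``$G$ disconnected $\Rightarrow\lambda_2=0$'' (the contrapositive of one implication), suppose $G$ splits into a connected component with vertex set $A\subsetneq V$ and the remaining vertices $B=V\setminus A$, both nonempty. Consider the indicator vectors $\mathbf{1}_A$ and $\mathbf{1}_B$. Every edge of $G$ has both endpoints in $A$ or both endpoints in $B$, so each term $(x(a)-x(b))^2$ in the quadratic form vanishes for $\mathbf{x}=\mathbf{1}_A$ and for $\mathbf{x}=\mathbf{1}_B$; hence $\mathbf{1}_A^T L_G \mathbf{1}_A = \mathbf{1}_B^T L_G \mathbf{1}_B = 0$, and since $L_G$ is positive semidefinite this forces $L_G\mathbf{1}_A=L_G\mathbf{1}_B=\mathbf{0}$. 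These two vectors are nonzero with disjoint supports, hence linearly independent, so $\dim\ker L_G\ge 2$ and therefore $\lambda_2=0$.

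For the direction ``$G$ connected $\Rightarrow\lambda_2>0$'', assume $G$ is connected and suppose for contradiction that $\lambda_2=0$. Then $\ker L_G$ has dimension at least $2$, so it contains a nonzero vector $\mathbf{x}$ orthogonal to $\mathbf{1}=\phi_1$ (take $\phi_2$ via the variational characterization above, or simply choose such a vector inside the at-least-two-dimensional kernel). From $L_G\mathbf{x}=\mathbf{0}$ we obtain $\sum_{(a,b)\in E}(x(a)-x(b))^2 = \mathbf{x}^T L_G \mathbf{x} = 0$, so $x(a)=x(b)$ for every edge $(a,b)$. Since any two vertices are joined by a path in a connected graph, propagating this equality along the path shows that $\mathbf{x}$ is constant on $V$; but a constant vector orthogonal to $\mathbf{1}$ is the zero vector, contradicting $\mathbf{x}\neq\mathbf{0}$. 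Hence $\lambda_2>0$.

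The routine parts are the positive-semidefiniteness observation and the disconnected-case indicator vectors; the one step that needs genuine (if short) work is ``$x$ constant across every edge $\Rightarrow x$ globally constant,'' which is exactly where connectivity enters and is handled by induction along a walk between two vertices. I would also be careful throughout to phrase everything in terms of the quadratic form $\sum_{(a,b)\in E}(x(a)-x(b))^2$, reading $L_G$ as the positive semidefinite operator that realizes it, so as to stay consistent with the sign convention fixed by the quadratic form theorem.
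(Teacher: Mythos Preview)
Your argument is correct and is in fact the standard proof of this result. Note, however, that the paper does not actually supply its own proof of this theorem: it appears in Section~3 (``Basic Theorems'') as a cited background fact from Spielman, stated without proof. So there is nothing in the paper to compare against beyond observing that your write-up is the usual argument one would give---positive semidefiniteness from the quadratic form, indicator vectors of components for the disconnected direction, and constancy along paths for the connected direction. The only small remark: your step ``$\mathbf{x}^T L_G \mathbf{x}=0$ and $L_G\succeq 0$ force $L_G\mathbf{x}=0$'' is true but perhaps deserves one line of justification (e.g.\ via the spectral decomposition of $L_G$) if you want the proof to be fully self-contained.
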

\begin{theorem}
Suppose $G_1$ and $G_2$ are graphs with  the relation $G_1\succcurlyeq cG_2$. Then $\lambda_k(G_1)\succcurlyeq \lambda_k(G_2)$
\end{theorem}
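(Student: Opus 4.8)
I read the conclusion as the scalar inequality $\lambda_k(G_1)\ge c\,\lambda_k(G_2)$ for each $k=1,\dots,n$ (the displayed ``$\succcurlyeq$'' between numbers appears to be a typo), and I take $c\ge 0$ as a standing hypothesis — it is needed for scaling a quadratic form to preserve an inequality, and the statement fails for $c<0$. Unwinding Definition 2.5, $G_1\succcurlyeq cG_2$ says exactly that $\mathbf{x}^T L_{G_1}\mathbf{x}\ge c\,\mathbf{x}^T L_{G_2}\mathbf{x}$ for all $\mathbf{x}\in\mathbb{R}^n$; dividing by $\mathbf{x}^T\mathbf{x}$ for $\mathbf{x}\ne 0$, the Rayleigh quotients obey $R_{G_1}(\mathbf{x})\ge c\,R_{G_2}(\mathbf{x})$ pointwise, where $R_G(\mathbf{x})=\mathbf{x}^T L_G\mathbf{x}/\mathbf{x}^T\mathbf{x}$.

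The engine of the proof is the variational description of eigenvalues in Theorem 3.2, which I would first recast in Courant–Fischer (subspace min–max) form as a short lemma: for a symmetric $n\times n$ matrix with ordered eigenvalues $\lambda_1\le\cdots\le\lambda_n$,
$$\lambda_k=\min_{\substack{S\subseteq\mathbb{R}^n\\\dim S=k}}\ \max_{\substack{\mathbf{x}\in S\\\mathbf{x}\ne 0}}R(\mathbf{x}).$$
The ``$\le$'' direction takes $S=\operatorname{span}(\phi_1,\dots,\phi_k)$; the ``$\ge$'' direction uses that every $k$-dimensional subspace $S$ meets $\operatorname{span}(\phi_k,\dots,\phi_n)$ nontrivially by a dimension count, and that $R\ge\lambda_k$ on that intersection. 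This is the standard deduction of the min–max theorem from the deflation statement already recorded in Theorem 3.2; I include it so the comparison below can range over subspaces rather than over eigenvectors, which is essential since $G_1$ and $G_2$ have different eigenvectors.

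With that in hand the main step is two lines. Fix $k$ and let $S^\star$ be a $k$-dimensional subspace attaining the minimum for $L_{G_1}$. For every nonzero $\mathbf{x}\in S^\star$ we have $R_{G_1}(\mathbf{x})\ge c\,R_{G_2}(\mathbf{x})$, hence
$$\lambda_k(G_1)=\max_{\mathbf{x}\in S^\star}R_{G_1}(\mathbf{x})\ \ge\ c\max_{\mathbf{x}\in S^\star}R_{G_2}(\mathbf{x})\ \ge\ c\min_{\dim S=k}\max_{\mathbf{x}\in S}R_{G_2}(\mathbf{x})=c\,\lambda_k(G_2),$$
where $c\ge 0$ lets us pull $c$ through the $\max$, and the last inequality holds because $S^\star$ is one admissible competitor in the minimization defining $\lambda_k(G_2)$.

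I expect no genuinely hard step here: the only things requiring care are (i) stating $c\ge 0$ explicitly, since the result is false otherwise, and (ii) establishing the subspace min–max formula from the deflation version of Theorem 3.2 rather than quoting it as a black box, so the argument stays self-contained. Everything else is bookkeeping with Rayleigh quotients.
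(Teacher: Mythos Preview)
Your argument is correct and is the standard Courant--Fischer route to this eigenvalue comparison. Note, however, that the paper does not supply its own proof of this theorem: it appears in Section~3 as one of several background results quoted from Spielman~\cite{b1}, stated without proof. So there is no ``paper's proof'' to compare against; your write-up simply fills in what the paper takes as given. Your observations about the apparent typos (the missing factor $c$ in the conclusion and the use of $\succcurlyeq$ between scalars) and the need for $c\ge 0$ are well taken and match how the result is actually used later in the paper.
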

\begin{theorem}
If $G_1$ is a subgraph of $G_2$ then $G_1\preccurlyeq  cG_2$.
\end{theorem}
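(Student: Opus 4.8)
The plan is to reduce the statement to the quadratic-form identity for the Laplacian, since by Definition 2.5 the relation $L_{G_1}\preccurlyeq cL_{G_2}$ is \emph{defined} as the pointwise inequality $\mathbf{x}^T L_{G_1}\mathbf{x}\le c\,\mathbf{x}^T L_{G_2}\mathbf{x}$ for all $\mathbf{x}$, and the quadratic-form theorem gives us a completely explicit expression for each side. First I would fix a common ambient vertex set. If $G_1=(V_1,E_1)$ is a subgraph of $G_2=(V_2,E_2)$, then $V_1\subseteq V_2$ and $E_1\subseteq E_2$; I would enlarge $G_1$ by adjoining the vertices of $V_2\setminus V_1$ as isolated vertices, so that both graphs now have the same vertex set $V_2$ and Laplacians of the same size $n=|V_2|$. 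Adjoining isolated vertices only adds zero rows and columns to $L_{G_1}$ and therefore changes neither its spectrum nor the value of $\mathbf{x}^T L_{G_1}\mathbf{x}$ on the original coordinates, so this reduction is harmless and puts us inside the setting where the Loewner order is defined.

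Next, for an arbitrary $\mathbf{x}\in\mathbb{R}^n$ I would apply the quadratic-form identity $\mathbf{x}^T L\mathbf{x}=\sum_{(a,b)\in E} w_{a,b}(x(a)-x(b))^2$ to both graphs and compare the two sums term by term. Writing $w^{(i)}_{a,b}$ for the weight of edge $(a,b)$ in $G_i$, we obtain
$$\mathbf{x}^T L_{G_2}\mathbf{x}-\mathbf{x}^T L_{G_1}\mathbf{x}=\sum_{(a,b)\in E_2} w^{(2)}_{a,b}\bigl(x(a)-x(b)\bigr)^2-\sum_{(a,b)\in E_1} w^{(1)}_{a,b}\bigl(x(a)-x(b)\bigr)^2.$$
Since $E_1\subseteq E_2$, and since in the unweighted case (or, more generally, whenever the subgraph inherits weights not exceeding those of $G_2$) every retained edge satisfies $w^{(1)}_{a,b}\le w^{(2)}_{a,b}$, every surviving summand on the right-hand side is nonnegative. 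Hence the difference is a sum of nonnegative terms and is $\ge 0$, i.e. $\mathbf{x}^T L_{G_1}\mathbf{x}\le\mathbf{x}^T L_{G_2}\mathbf{x}$ for every $\mathbf{x}$, which is precisely $G_1\preccurlyeq G_2$, the claimed statement with $c=1$. Moreover $\mathbf{x}^T L_{G_2}\mathbf{x}=\sum_{(a,b)\in E_2} w^{(2)}_{a,b}(x(a)-x(b))^2\ge 0$, so the inequality persists for any $c\ge 1$, giving $G_1\preccurlyeq cG_2$ in full generality.

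The one genuinely delicate point — and the step I would take most care with — is the bookkeeping around the word ``subgraph'' and the fact that the Loewner order compares only $n\times n$ matrices: one must justify that padding $G_1$ with isolated vertices is legitimate and affects neither the Laplacian's action on the old coordinates nor its quadratic form, so that the comparison above is being made between objects of the same size. Once that is settled, the proof is just the term-by-term nonnegativity argument, which is immediate from the quadratic-form theorem. Finally, I would remark that this theorem is the natural partner of Theorem 3.5: composing the two shows that $G_1$ a subgraph of $G_2$ forces $\lambda_k(G_1)\le\lambda_k(G_2)$ for every $k$, and this is the form in which it will actually be invoked when deriving the eigenvalue bounds for the complete-type, star-type, and binary-tree bridge graphs later in the paper.
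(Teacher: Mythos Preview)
The paper does not actually prove this statement: Theorem~3.6 is listed in Section~3 among the ``Basic Theorems'' imported from Spielman~\cite{b1} and is stated without proof. So there is no argument in the paper to compare against.

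Your proof is correct and is the standard one. Using Theorem~3.1 (the quadratic-form identity) to compare $\mathbf{x}^T L_{G_1}\mathbf{x}$ and $\mathbf{x}^T L_{G_2}\mathbf{x}$ term by term, and observing that $E_1\subseteq E_2$ forces the difference to be a sum of nonnegative squares, is exactly how this fact is established in Spielman's notes and elsewhere. Your care about padding $G_1$ with isolated vertices so that both Laplacians live on the same vertex set is appropriate; in practice the paper always applies this result with $V_1=V_2$ already (e.g.\ $D_n^2\preccurlyeq D_n^{2\times k}$, $T_n^2\preccurlyeq T_n^{2\times k}$), so the padding step is never needed in its applications, but it is the right thing to note for a general statement. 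Your reading of the unquantified constant $c$ as $c=1$ (and hence any $c\ge 1$) is also the intended one; the ``$c$'' in the paper's statement appears to be a leftover typo, since every invocation of the theorem in Sections~4--6 uses the plain inequality $G_1\preccurlyeq G_2$.
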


\section{$K_n$ Type Bridge Graphs}
Now we will discuss dumbbell-like graphs $D_n^m$, which are formed by joining two complete graphs with $n$ vertices, $K_{n,1}$ and $K_{n,2}$, with a path graph $P_m$. For example, if we connect two $K_8$'s together with $P_3$, we have the following:
\begin{center}
\includegraphics[scale = 0.35]{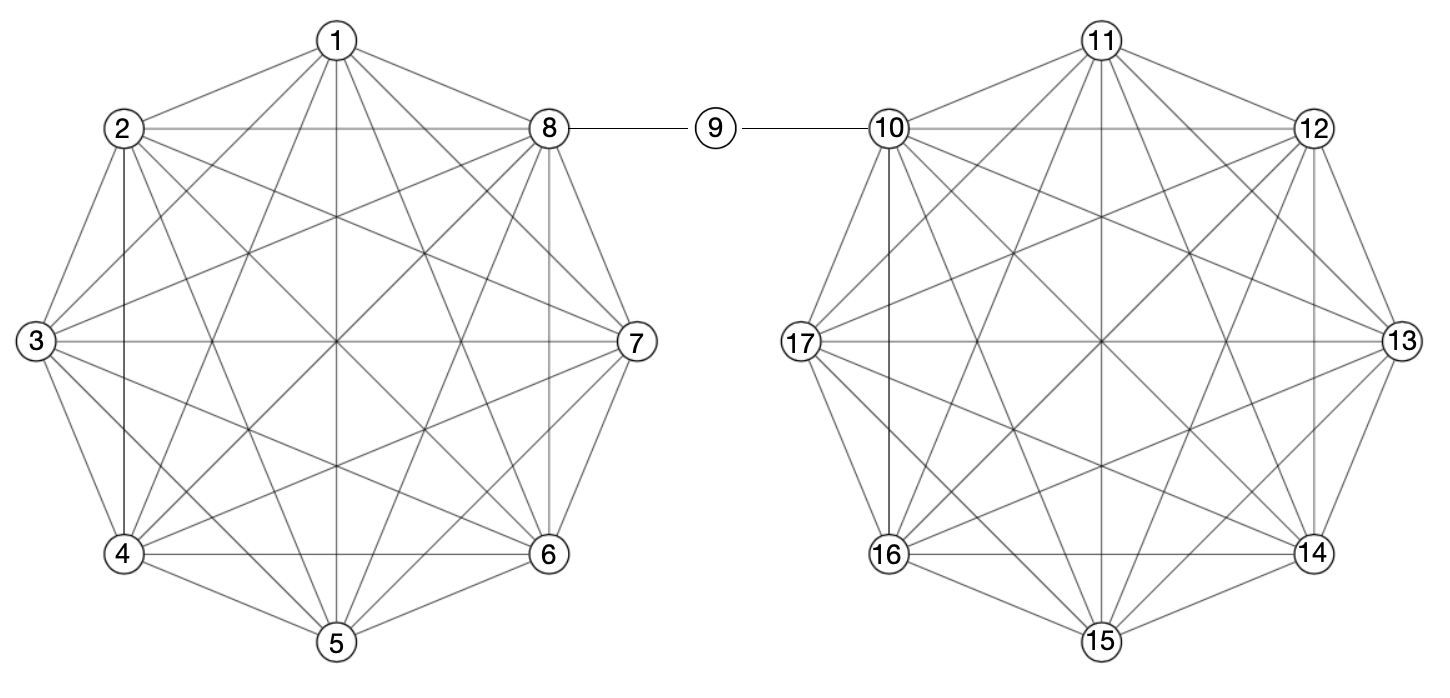}
\end{center}
Notice that this is a simple example of a bridge graph. \\

\begin{corollary}(The Path Inequality)
 A path graph $P_{a,b}$ is a path from $a$ to $b$, and $G_{a,b}$ is a graph with a single edge $(a,b)$, then the following path inequality holds:
 $$
 |P_{a,b}|P_{a,b} \succcurlyeq G_{a,b}.
 $$
\end{corollary}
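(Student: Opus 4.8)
The plan is to reduce the Loewner inequality to a pointwise comparison of quadratic forms and then apply the Cauchy--Schwarz inequality. By the quadratic-form identity (Theorem on $\mathbf{x}^T\mathbf{L}\mathbf{x}$), for any function $\mathbf{x}:V\to\mathbb{R}$ we have $\mathbf{x}^T L_{G_{a,b}} \mathbf{x} = (x(a)-x(b))^2$, since $G_{a,b}$ consists of the single edge $(a,b)$; and if the path $P_{a,b}$ traverses the vertices $a=v_0,v_1,\dots,v_k=b$, where $k=|P_{a,b}|$ is the number of edges, then
$$
\mathbf{x}^T L_{P_{a,b}} \mathbf{x} = \sum_{i=0}^{k-1}\bigl(x(v_i)-x(v_{i+1})\bigr)^2 .
$$
So it suffices to prove the scalar inequality $k\sum_{i=0}^{k-1}(x(v_i)-x(v_{i+1}))^2 \ge (x(a)-x(b))^2$ for every $\mathbf{x}$.

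First I would write $x(a)-x(b)$ as the telescoping sum $\sum_{i=0}^{k-1}\bigl(x(v_i)-x(v_{i+1})\bigr)$. Applying Cauchy--Schwarz to this sum paired with the all-ones vector of length $k$ gives
$$
\bigl(x(a)-x(b)\bigr)^2 = \Bigl(\sum_{i=0}^{k-1}\bigl(x(v_i)-x(v_{i+1})\bigr)\Bigr)^2 \le k\sum_{i=0}^{k-1}\bigl(x(v_i)-x(v_{i+1})\bigr)^2 .
$$
This is precisely the required pointwise bound, so $|P_{a,b}|\,\mathbf{x}^T L_{P_{a,b}}\mathbf{x} \ge \mathbf{x}^T L_{G_{a,b}}\mathbf{x}$ for all $\mathbf{x}\in\mathbb{R}^n$, and by the definition of the Loewner partial order this is exactly $|P_{a,b}|\,P_{a,b}\succcurlyeq G_{a,b}$.

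Since the argument is a single application of Cauchy--Schwarz, I do not expect a real obstacle; the only points needing care are bookkeeping ones. To compare the two Laplacians under $\succcurlyeq$ they must act on the same $\mathbb{R}^n$, so I would fix the ambient vertex set to contain $a$, $b$, and all internal path vertices, and observe that $L_{G_{a,b}}$ acts as zero on the internal-vertex coordinates, so the formula for its quadratic form remains valid. I would also state at the outset that $|P_{a,b}|$ denotes the edge count of the path, since that is the constant for which the telescoping-plus-Cauchy--Schwarz step is sharp; using the vertex count would only weaken the statement.
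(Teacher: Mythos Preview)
Your argument is correct and is in fact the standard proof of the path inequality: telescope $x(a)-x(b)$ along the path and apply Cauchy--Schwarz against the all-ones vector of length $k=|P_{a,b}|$. The bookkeeping remarks about embedding both Laplacians in a common $\mathbb{R}^n$ and interpreting $|P_{a,b}|$ as the edge count are exactly the right clarifications.

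The paper itself does not supply a proof of this corollary; it is simply quoted (as a corollary of the basic results attributed to Spielman) and then invoked repeatedly in the lower-bound arguments of Theorems~4.1, 5.1, 6.1, and 6.3. So there is no ``paper's own proof'' to compare against---your write-up fills that gap and matches the argument one finds in Spielman's notes.
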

\begin{theorem}
For the dumbbell-like graph we mentioned above, $D_n^m$, we know that $|V_{D_n^m}|=2n+m-2$ , we have the following bound on the eigenvalues:
$$
\frac{2}{(2n+m-3)(m+1)}\le\lambda_2(D_n^m)\le\frac{12}{6(m-1)(n-1)+m(m-1)}
$$
\begin{proof}
Let $K_{n,1}$ be the first complete graph and $K_{n,2}$ be the second complete graph. We label the vertices of $K_{n,1}$ as $\{1,2,3, \dots ,n-1,n\}$ and suppose that the vertex shared by $K_{n,1}$ and $P_m$ is labeled as $n$. Then the next vertex on $P_m$, which is attached to the vertex $n$ is labeled as $n+1$. Repeat the same process until we label the vertex which is on both $P_m$ and $K_{n,2}$ as $n+m-1$. Finally we label the vertices of $K_{n,2}$ to be $\{n+m-1,n+m,\dots,2n+m-2\}$  \newline

To get the upper bound we construct test vector $\bold{x}$ to be 
$$
\bold{x}(i) = 
\begin{dcases}
m-1\quad &1\le i < n\\
2n+m-1-2i \quad & n\le i <n+m-1\\
1-m \quad &n+m-1\le i \le 2n+m-2
\end{dcases}.
$$
The vertices $n$ and $n+m-1$ are both on one of the complete graphs and the path graphs, so we need to check their value on both graphs to make sure our construction of test vector $\bold{x}$ is consistent. \newline

When $i=n$ we plug into $x(i)=m-1$ gets ${x(i)=m-1}$. Also, if substitute $i = n$ into $x(i)=2n+n-2i$, we get $x(i)=2n+m-1-2n=m-1$. This matches with the other graph. When $i=n+m-1$ we substitute into $x(i)=2n+m-1-2i$, which yields $x(i)=2n+m-1-2(n+m-1)=-m+1$. Also if we substitute $i = n$ into $x(i)=1-m$, we get $x(i)=1-m$, This also matches with the other graph. Hence we have verified the consistency of our test vector.\\

Now we need to calculate the inner product of test vector $\bold{x}$ and the vector $\bold{1}=(1,\dots,1)^T$. We have 
\begin{align*}
(\bold{x},\bold{1}) &= \sum_{i\in V}x(i)\\
&=\sum_{i=1}^{2n+m-2} x(i)\\
&=\sum_{i=1}^{n-1} x(i) + \sum_{i=n}^{n+m-1}x(i)+\sum_{i=n+m}^{2n+m-2} x(i)\\
&=\sum_{i=1}^{n-1} (m-1)+\sum_{i=n}^{n+m-1} (2n+m-1-2i) + \sum_{i=n+m}^{2n+m-2} (1-m).
\end{align*}
We now deal with the three terms separately. For the first term, 
$$\sum_{i=1}^{n-1} (m-1) = (m-1)(n-1).$$
For the second second term, we get
$$\sum_{i=n}^{n+m-1} (2n+m-1-2i) = (2n+m-1)(n+m+1-n-1)-2\cdot\frac{n+m-1+n}{2}.$$
Lastly, 
$$\sum_{i=n+m}^{2n+m-2} (1-m) = (1-m)(n-1).$$
Adding the terms up, we get 
$$(\bold{x},\bold{1}) = 0.$$ \newline

From the calculation above, we know that we can use $\bold{x}$ to get the upper bound of $\lambda_2(D_n^m)$. From Theorem 4 we have 
\begin{align*}
\lambda_2(D_n^m)
&\le \frac{\bold{x}^T\bold{L}\bold{x}}{\bold{x}^T\bold{x}}\\
&\le \frac{\sum_{(a,b)\in E} (x(a)-x(b))^2}{\sum_{i=1}^{2n+m-2}x(i)^2}\\
& = \frac{\sum_{1\le i,j<n} (x(i)-x(j))^2}{\sum_{1\le i<n} x(i)^2+\sum_{n\le i<n+m-1} x(i)^2 +\sum_{n+m-1\le i\le 2n+m-1} x(i)^2}\\
&+ \frac{\sum_{n\le i,j<n+m-1} (x(i)-x(j))^2}{\sum_{1\le i<n} x(i)^2+\sum_{n\le i<n+m-1} x(i)^2 +\sum_{n+m-1\le i\le 2n+m-1} x(i)^2}\\
&+ \frac{\sum_{n+m-1\le i,j\le 2n+m-1} (x(i)-x(j))^2}{\sum_{1\le i<n} x(i)^2+\sum_{n\le i<n+m-1} x(i)^2 +\sum_{n+m-1\le i\le 2n+m-1} x(i)^2}.
\end{align*}
The first and last term of the sum are zero, so this means that 
\begin{align*}
\lambda_2(D_n^m)&=\frac{0+\sum_{i=n}^{n+m-2} (x(i)-x(i+1))^2+0}{(m-1)(n-1)+\frac{m(m-1)(m+1)}{3}+(m-1)(n-1)}\\
&=\frac{12}{6(m-1)(n-1)+m(m-1)}.
\end{align*}
To get the lower bound, we use the Loewner partial order. \newline

For every pair of edge $(a,b)\in E_{D_n^m}$,let the path graph $P_{a,b}$ be a path from $a$ to $b$, and $G_{a,b}$ be a graph with a single edge $(a,b)$, then from the corollary we have $|P_{a,b}|P_{a,b} \succcurlyeq G_{a,b}$. We know that if $1\le a \le n$, then this means $a$ is a vertex of the $K_{n,1}$. Thus, $a$ is connected to vertex $n$. If $n+m-1\le a \le 2n+m-2$ then this means $a$ is a vertex of the $K_{n,2}$ and $a$ is connected to vertex $n+m-1$. If $a$ and $b$ are in the same complete graph then the length of $P_{a,b}$ is $1$; if $a$ and $b$ are in the different complete graphs, then the length of $P_{a,b}$  will be $1+m-1+1=m+1$. If either of $a$ or $b$ are in the path graph, then the length of $P_{a,b}$ shorter than the case when $a$ and $b$ are in different complete graphs. Hence, we conclude that $|P_{a,b}|\le m+1$. \newline

It follows that
$$G_{a,b}\preccurlyeq |P_{a,b}|P_{a,b} \preccurlyeq(m+1)P_{a,b}\preccurlyeq(m+1)D_n^m.
$$
Also, we notice that complete graph $K_{2n+m-2}$ is constructed by connecting all edges together. It has $\binom{2n+m-2}{2}$ single edges.Thus $$K_{2n+m-2}\preccurlyeq \sum_{(a,b)\in E_{K_{2n+m-2}}}G_{a,b}\preccurlyeq \binom{2n+m-2}{2} G_{a,b} \preccurlyeq \binom{2n+m-2}{2}(m+1) D_m^n.
$$
Thus,
$$
2n+m-2=\lambda(K_{2n+m-2}) \le \binom{2n+m-2}{2}(m+1) \lambda(D_m^n).
$$
From the above, we get that $\lambda(D_m^n)\geq \frac{2}{(2n+m-1)(m+1)}$.
\end{proof}

\end{theorem}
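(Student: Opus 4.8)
The plan is to prove the two inequalities independently: the upper bound by exhibiting an explicit test vector and invoking the variational (Courant--Fischer) characterization of $\lambda_2$, and the lower bound by a Loewner-domination argument comparing $D_n^m$ to the complete graph on the same vertex set through the Path Inequality.

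For the \textbf{upper bound}, I would construct a test vector $\mathbf{x}$ that is constant on each of the two cliques --- say $m-1$ on $K_{n,1}$ and $-(m-1)$ on $K_{n,2}$ --- and interpolates in steps of $2$ along the bridge path, so that the two clique values are joined by an arithmetic progression symmetric about $0$. Two things must be checked. First, $\mathbf{x}\perp\mathbf{1}$: the two clique blocks contribute equal and opposite amounts, the middle progression is centered at $0$, so the total sum vanishes; one also checks that the shared junction vertices receive a consistent value from both defining formulas. Second, the Rayleigh quotient is evaluated via the quadratic-form identity $\mathbf{x}^T\mathbf{L}\mathbf{x}=\sum_{(a,b)\in E}(x(a)-x(b))^2$: every edge inside a clique contributes $0$ since $\mathbf{x}$ is constant there, so the numerator collapses to the sum over the bridge edges, each contributing $2^2=4$. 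The denominator $\sum_i x(i)^2$ splits into the two constant clique blocks plus a sum of squares over the centered arithmetic progression on the path. Dividing and simplifying gives the stated bound $\frac{12}{6(m-1)(n-1)+m(m-1)}$.

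For the \textbf{lower bound}, I would run the standard Loewner comparison. By the Path Inequality, for any pair $(a,b)$ of vertices of $D_n^m$, choosing a path $P_{a,b}$ inside $D_n^m$ gives $G_{a,b}\preccurlyeq |P_{a,b}|\,P_{a,b}\preccurlyeq |P_{a,b}|\,D_n^m$, the last step because $P_{a,b}$ is a subgraph of $D_n^m$. The key geometric fact is that any two vertices of $D_n^m$ are joined by a path of length at most $m+1$: vertices in the same clique are adjacent; vertices in different cliques are joined by one edge into the first junction, the $m-1$ bridge edges, and one edge out of the second junction; and if an endpoint lies on the bridge the distance is only shorter. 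Summing these relations over all $\binom{2n+m-2}{2}$ edges of the complete graph $K_{2n+m-2}$ on the same vertex set gives $K_{2n+m-2}\preccurlyeq \binom{2n+m-2}{2}(m+1)\,D_n^m$. Applying monotonicity of Laplacian eigenvalues under Loewner order together with $\lambda_2(K_N)=N$ yields $2n+m-2\le \binom{2n+m-2}{2}(m+1)\,\lambda_2(D_n^m)$, and solving (using $\binom{2n+m-2}{2}=\tfrac{(2n+m-2)(2n+m-3)}{2}$) gives $\lambda_2(D_n^m)\ge \frac{2}{(2n+m-3)(m+1)}$.

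I expect the main obstacle to be bookkeeping rather than conceptual difficulty: one must index the vertices of the bridge carefully so that the test vector is genuinely orthogonal to $\mathbf{1}$ and consistent at the two shared junction vertices, and then carry out the arithmetic-progression sum-of-squares in the denominator without slips. On the lower-bound side the only subtlety is justifying the uniform bound $|P_{a,b}|\le m+1$ in every case, including when one or both endpoints lie on the bridge path. It is worth noting that the Loewner route, while clean, is lossy --- the $\binom{2n+m-2}{2}$ factor makes the resulting lower bound far from tight --- so improving it would require a more direct estimate, e.g. a Cheeger-type bound or a sharper test-function argument.
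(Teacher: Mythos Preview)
Your proposal matches the paper's proof essentially line for line: the same test vector (constant $\pm(m-1)$ on the two cliques, arithmetic progression with step $2$ along the bridge), the same orthogonality and junction-consistency checks, and the same Rayleigh-quotient computation for the upper bound; and the identical Path-Inequality/Loewner comparison to $K_{2n+m-2}$ with the uniform diameter bound $|P_{a,b}|\le m+1$ for the lower bound. Your closing remarks on the lossiness of the Loewner route and possible Cheeger-type improvements go beyond what the paper does, but the core argument is the same.
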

We notice that when $m=1$ then $D_n^1$ is a graph constructed by connecting two complete graphs with a single edges. For a bridge graph $D_n^{2\times k}$ with $k\le n$ which is constructed by two identical complete graphs $K_{n,1}$ and $K_{n,2}$ k different edges $e_1,\dots,e_k$ with the edge length of them are all 2, and for every edge 
$e_i=(v_{i,1},v_{i,2})$ where $v_{i,1}$ is in $K_{n,1}$ and $v_{i,2}$ is in $K_{n,2}$. A picture is given below in the case where $n = 8$ with $k=2$ and $e_1=(8,9),e_2=(7,16)$:
\begin{center}
\includegraphics[scale = 0.35]{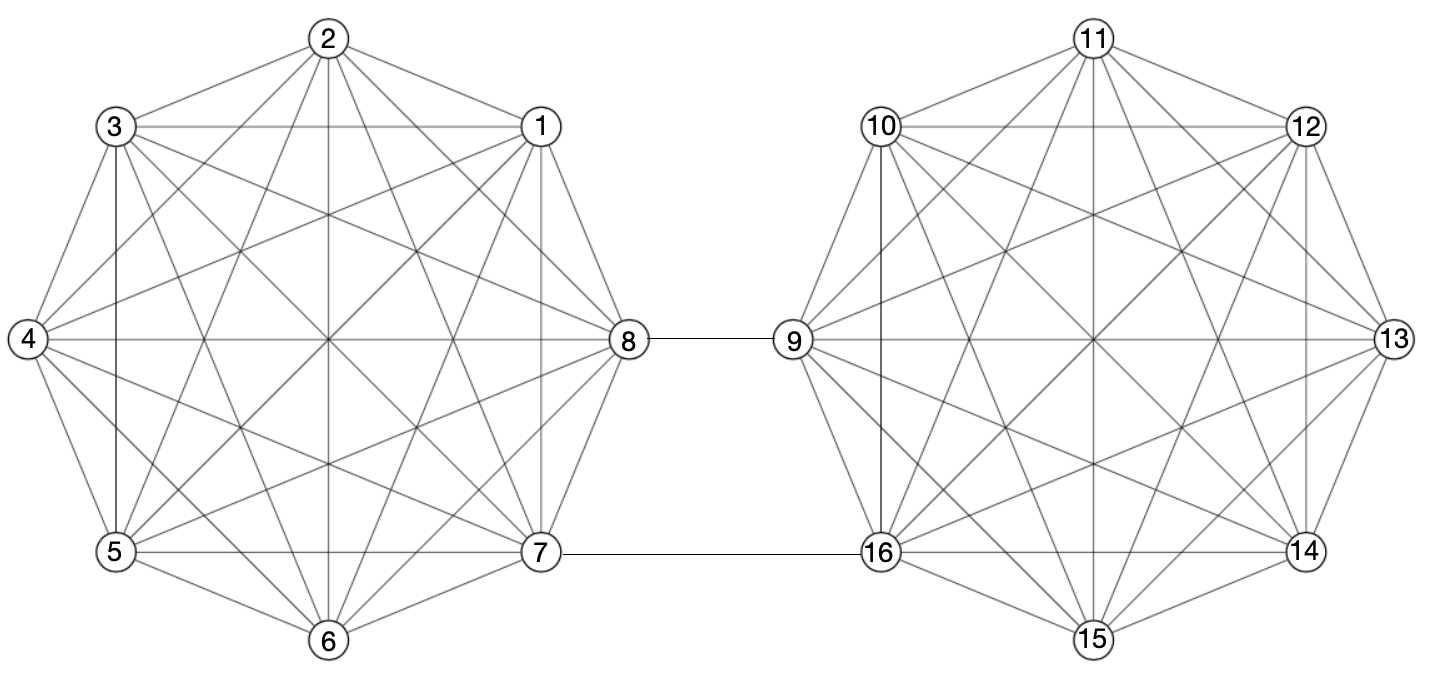}
\end{center}
We can generalize our results from before to the following theorem.

\begin{theorem}
For the graph we mentioned above, $D_n^{2\times K}$, we know that $|V_{D_n^{2\times 2}}|=2n+1$. We also have the following bound on the second eigenvalue of the graph laplacian:
$$
\frac{2}{3(2n-1)}\le\lambda_2(D_n^{2\times k})\le\frac{4}{n}.
$$
\begin{proof}
From $D_n^2$ is a subgraph of $D_n^{2\times k}$ we can get  $D_n^2 \preccurlyeq D_n^{2\times k}$. Thus $\frac{2}{3(2n-1)}=\lambda_2(D_n^2) \le\lambda_2(D_n^{2\times k})$. 
For the other half of the inequality we can label vertices the same way as the graph $D_n^2$. We know vertices $v_{i,1}$ and $v_{i,2}$ are adjacent to each other for $1\le i \le k$ such that $v_{i,i} \in K_{n,1}$ and $v_{i,2} \in K_{n,2}$. Then we have $1\le v_{i,1}\le n$ and $n+1\le v_{i,2}\le 2n$. Also, we can use the same test vector $\bold{x}$ as the graph $D_n^m$ too. Let
$$
\bold{x}(i) = 
\begin{dcases}
1\quad &1\le i \le n\\
-1 \quad &n+1\le i \le 2n
\end{dcases}.
$$
We can easily verify that $(\bold{x},\bold{1})=0$. \newline

Now we can estimate the upper bound of $\lambda_2({D_n^{2\times2}})$:
\begin{align*}
\lambda_2(D_n^{2\times 2})
&\le \frac{\bold{x}^T\bold{L}\bold{x}}{\bold{x}^T\bold{x}}\\
&\le \frac{\sum_{(a,b)\in E_{D_n^{2\times 2}}} (x(a)-x(b))^2}{\sum_{i=1}^{2n}x(i)^2}\\
& = \frac{\sum_{1\le i,j<n} (x(i)-x(j))^2}{\sum_{1\le i\le n} x(i)^2+\sum_{n+1\le i\le 2n} x(i)^2}\\
&+ \frac{\sum_{i=1}^{k}(x(v_{i,1})-x(v_{i,2}))^2}{\sum_{1\le i\le n} x(i)^2+\sum_{n+1\le i\le 2n} x(i)^2}\\
&+ \frac{\sum_{n+2\le i,j\le 2n} (x(i)-x(j))^2}{\sum_{1\le i\le n} x(i)^2+\sum_{n+1\le i\le 2n} x(i)^2}.
\end{align*}
The first and last term of the sum are zero, so this means that 
\begin{align*}
\lambda_2(D_n^m)&\le \frac{0+4k+0}{n+n}\\
&=\frac{4k}{2n}\\
&=\frac{2k}{n}.
\end{align*}
So we have finished bounding $D_n^{2\times k}$.
\end{proof}
\end{theorem}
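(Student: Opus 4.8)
The plan is to trap $\lambda_2(D_n^{2\times k})$ between the two stated bounds using exactly the two tools that handled $D_n^{m}$: monotonicity of the Laplacian eigenvalues under addition of edges for the lower bound, and a single well-chosen test vector in the Courant--Fischer characterization for the upper bound. Neither half requires new machinery; both are short once the graph is described precisely.

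For the \textbf{lower bound}, I would observe that $D_n^{2\times k}$ is obtained from $D_n^{2}$ (equivalently $D_n^{2\times 1}$) by adjoining $k-1$ further bridges, so $D_n^{2}$ sits inside $D_n^{2\times k}$ as a subgraph. Theorem 3.6 then gives $D_n^{2}\preccurlyeq D_n^{2\times k}$ (with constant $1$), and Theorem 3.5 promotes this to $\lambda_2(D_n^{2})\le\lambda_2(D_n^{2\times k})$. It remains to substitute $m=2$ into the lower bound of the $D_n^{m}$ theorem (Theorem 4.1): the quantity $\frac{2}{(2n+m-3)(m+1)}$ becomes $\frac{2}{(2n-1)\cdot 3}=\frac{2}{3(2n-1)}$, which is precisely the desired left-hand side. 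If instead a self-contained derivation is wanted, the path-inequality chain $G_{a,b}\preccurlyeq |P_{a,b}|\,P_{a,b}\preccurlyeq 3\,D_n^{2\times k}$, summed over all $\binom{2n}{2}$ vertex pairs and compared with the complete graph $K_{2n}$ (whose relevant eigenvalue is $2n$), reproduces the same estimate directly.

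For the \textbf{upper bound}, I would take the test vector $\mathbf{x}$ equal to $+1$ on every vertex of $K_{n,1}$ and $-1$ on every vertex of $K_{n,2}$; then $(\mathbf{x},\mathbf{1})=n-n=0$, so by Theorem 3.2 the Rayleigh quotient of $\mathbf{x}$ bounds $\lambda_2(D_n^{2\times k})$ from above. Evaluating the numerator via the quadratic form (Theorem 3.1): every edge internal to $K_{n,1}$ or to $K_{n,2}$ contributes $0$, since $\mathbf{x}$ is constant on each clique, while each of the $k$ bridge edges $(v_{i,1},v_{i,2})$ contributes $(1-(-1))^2=4$, giving $\mathbf{x}^T L\mathbf{x}=4k$; the denominator is $\mathbf{x}^T\mathbf{x}=2n$. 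Hence $\lambda_2(D_n^{2\times k})\le \frac{4k}{2n}=\frac{2k}{n}$, which specializes to $\frac{4}{n}$ in the case $k=2$ recorded in the statement.

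The routine parts are the two arithmetic simplifications; the only point needing real care — and the one I would pin down at the very start — is the bookkeeping about the graph itself, namely whether each of the $k$ bridges is a single edge $(v_{i,1},v_{i,2})$ (so $|V|=2n$) or a two-edge path through an internal vertex, and how the asserted count $|V_{D_n^{2\times 2}}|=2n+1$ and the phrase ``edge length $2$'' are to be reconciled. That single choice fixes both the ``$D_n^{2}$ is a subgraph'' step and the value of $\mathbf{x}^T\mathbf{x}$; the proof in the body evidently intends the single-edge reading (the computation $\mathbf{x}^T\mathbf{x}=n+n=2n$ forces it), so I would adopt that convention explicitly, note that the hypothesis $k\le n$ is exactly what lets the $k$ bridges attach at distinct vertices of each clique, and then the two bounds fall out as above.
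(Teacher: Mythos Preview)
Your proposal is correct and follows essentially the same route as the paper: the subgraph comparison $D_n^{2}\preccurlyeq D_n^{2\times k}$ for the lower bound and the $\pm 1$ test vector for the upper bound, with the same arithmetic. Your explicit flagging of the $|V|=2n$ versus $2n+1$ discrepancy and your care in treating $\frac{2}{3(2n-1)}$ as a lower bound for $\lambda_2(D_n^{2})$ (rather than an equality, as the paper writes) are both improvements in precision over the original.
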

Consider a general bridge graph $B_n^{2\times k}$ which is constructed by two arbitrary identical graphs $G_{n,1}$ and $G_{n,2}$ with $k$ different edges $e_1,\dots,e_k$ and $k\le n$. We also assume that there are $k$ distinct edges connecting the two graphs. An example is given in the figure:
\begin{center}
\includegraphics[scale = 0.35]{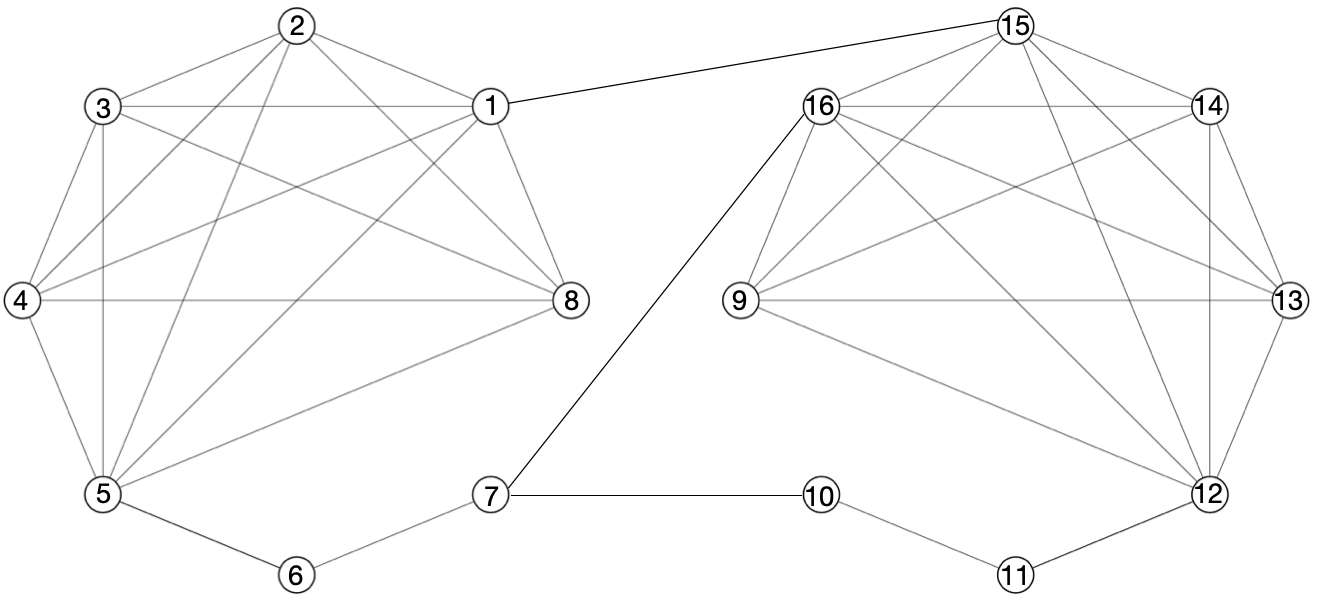}
\end{center}
For every edge 
$e_i=(v_{i,1},v_{i,2})$, where $v_{i,1}$ is in $G_{n,1}$ and $v_{i,2}$ is in $G_{n,2}$, we have the following theorem. 
\begin{theorem}
For a bridge graph $B_n^{2 \times k}$ we have 
$$
0< \lambda_2(B_n^{2\times k}) \le \frac{2k}{n}.
$$
\begin{proof}
The proof of the lower bound follows from the fact that $B_m^{2\times k}$ is a connected graph. Thus the second eigenvalue should be positive . The upper bound is a straight forward consequence of Theorem 4.2. We see that $B_n^{2\times k}$ is a subgraph of $B_n^{2\times k}$; hence $B_n^{2\times k}\preccurlyeq  D_n^{2\times k}$. It follows that $\lambda_2(B_n^{2\times k}) \le \lambda_2(D_n^{2\times k})\le \frac{2k}{n}$.
\end{proof}
\end{theorem}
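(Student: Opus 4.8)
The statement splits into a lower bound and an upper bound, and in each case the plan is to reduce to a theorem already established rather than to run a new computation.

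For the lower bound I would argue purely from connectivity. The graph $B_n^{2\times k}$ is obtained from the two connected graphs $G_{n,1}$ and $G_{n,2}$ by adjoining $k\ge 1$ edges $e_1,\dots,e_k$ between them, so it is connected: every vertex of $G_{n,1}$ reaches every vertex of $G_{n,2}$ through some bridging edge $e_i$, and within each $G_{n,j}$ all vertices are already mutually reachable. Theorem~3.4 then applies verbatim and gives $\lambda_2(B_n^{2\times k})>0$, which is the left-hand inequality; no degree or weight hypotheses enter, only connectivity.

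For the upper bound the plan is to realize $B_n^{2\times k}$ as a spanning subgraph of the complete-type bridge graph $D_n^{2\times k}$ and then invoke the monotonicity results of Section~3. Concretely, I would label the vertices of $B_n^{2\times k}$ exactly as the vertices of $D_n^{2\times k}$ were labeled in the proof of Theorem~4.2: the labels $1,\dots,n$ for the first copy and $n+1,\dots,2n$ for the second, with $e_i$ joining $v_{i,1}\in\{1,\dots,n\}$ to $v_{i,2}\in\{n+1,\dots,2n\}$. Since every graph on $n$ vertices is a subgraph of $K_n$, we have $G_{n,1}\subseteq K_{n,1}$ and $G_{n,2}\subseteq K_{n,2}$, and the $k$ bridging edges are common to both, so $B_n^{2\times k}$ is a subgraph of $D_n^{2\times k}$ on the same vertex set. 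Theorem~3.6 then yields $L_{B_n^{2\times k}}\preccurlyeq L_{D_n^{2\times k}}$ (with constant $1$, since the omitted edges of $D_n^{2\times k}$ contribute a genuine graph Laplacian, which is positive semidefinite), and Theorem~3.5 upgrades this to $\lambda_2(B_n^{2\times k})\le\lambda_2(D_n^{2\times k})$. Finally Theorem~4.2 bounds the right-hand side by $2k/n$, which closes the argument.

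There is no real analytic difficulty; the only point that needs care is the combinatorial set-up of the subgraph relation: checking that the vertex identifications are consistent and that the prescribed bridging edges $e_1,\dots,e_k$ of $B_n^{2\times k}$ form an admissible choice of bridging edges for $D_n^{2\times k}$, which is exactly why the hypothesis $k\le n$ is present. As a self-contained alternative that avoids the subgraph reduction, one can re-run the test-vector estimate of Theorem~4.2 with the same $\mathbf{x}$ (value $+1$ on the first copy, $-1$ on the second): one checks $(\mathbf{x},\mathbf{1})=0$, and in $\mathbf{x}^{T}L\mathbf{x}=\sum_{(a,b)\in E}(x(a)-x(b))^2$ every edge internal to $G_{n,1}$ or $G_{n,2}$ contributes $0$ irrespective of the structure of those graphs, leaving only the $k$ bridging edges with contribution $4$ each; dividing by $\mathbf{x}^{T}\mathbf{x}=2n$ again gives $2k/n$. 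I would lead with the subgraph proof since it is shorter and uses Theorem~4.2 as a black box.
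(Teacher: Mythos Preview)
Your proposal is correct and follows essentially the same approach as the paper: the lower bound comes from connectivity (Theorem~3.4), and the upper bound comes from the subgraph relation $B_n^{2\times k}\subseteq D_n^{2\times k}$ combined with the Loewner monotonicity results and Theorem~4.2. Your write-up is in fact more careful than the paper's about the labeling and the subgraph verification, and your alternative direct test-vector computation is a nice bonus, but the core argument is the same.
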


\section{$S_n$ Type Bridge Graphs}
The star graph, $S_n$, is another graph we will consider. The star graph is special because it is a complete bipartite graph, $K_{1,n-1}$. Now we can construct star-type bridge graphs, $S_n^m$, by connecting two identical star graph $S_{n,1}$ and $S_{n,2}$ with a path graph $P_m$.

\begin{theorem}
For the  star-like graphs $S_n^m$ we mentioned above, we have the following bound on the eigenvalues:
$$
\frac{2}{(2n+m-3)(m+3)} \le \lambda_2(S_n^m) \le \frac{4n+2}{2n+m-4}.
$$
\begin{proof}
Since $S_{n,1}$ is also a bipartite graph ,we can separate it to different set $V_{(1,1)}$ and $V_{(1,2)}$ where $V_{(1,1)}$ only has one vertex which is internal vertex for the tree $S_{n,1}$. And the remaining vertices are all leaves of tree and they are in $V_{(1,2)}$. Notice that no edges has both vertices in the same sets, and every edges that connect vertices in different set is part of the graph. We label the only vertex in $V_{(1,1)}$ as $1$ and remaining as $2,\dots,n$. \newline

We know that there is a vertex $v^1$ in $S_{n,1}$ is also on the graph $P_m$. Then we label the vertex which is attached to $v^1$ but not in graph $S_{n,1}$ as $n+1$, repeat the same process until we label the vertex $n+m-2$.We can also separate it to different set $V_{(2,1)}$ and $V_{(2,2)}$ where $V_{(2,2)}$ only has one vertex which is internal vertex for the tree $S_{n,2}$. And the remaining vertices are all leaves of tree and they are in $V_{(2,2)}$. Also notice that no edges has both vertices in the same sets, and every edges that connect vertices in different set is part of the graph. We label the only vertex in $V_{(1,1)}$ as $n+m-1$ and remaining as $n+m,\dots,2n$. We notice that there is a vertex $v^2$ in $S_{n,2}$ is also on the graph $P_m$. Thus $n+m-1\le v^2 \le 2n$. \newline

Now we can set the test vector. Now we need to discuss different cases based on whether $n$ is odd or even and based on the value of $v^1$ and $v^2$. \newline

\textbf{Case 1:}
$n$ is odd, $v^1=1$ and $v^2=n+m-1$, the figure below is the case when $n=9,m=3$ \newline
\begin{center}
    \includegraphics[scale = 0.20]{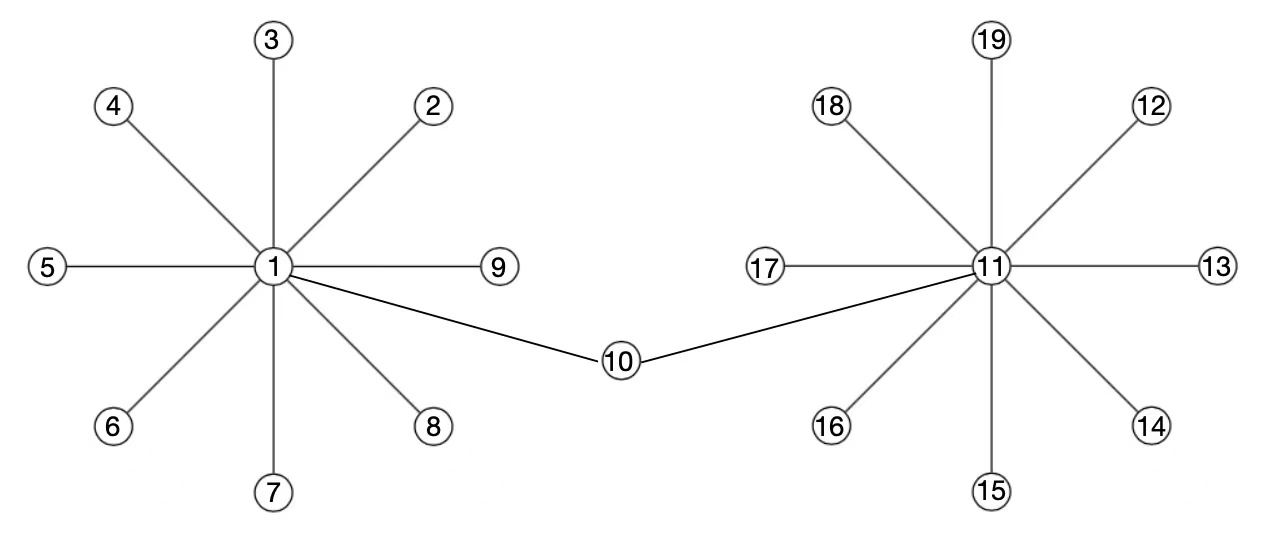}    
\end{center}
We choose the test vector 
$$
\bold{x}(i) = 
\begin{dcases}
1\quad &i=1,n+m-1\\
0\quad &i=n \text{ or } i=2n \text{ or } n+1\le i \le n+m-2\\
1 \quad &2\le i \le \frac{n-1}{2}\\
-1 \quad &\frac{n+1}{2}\le i \le n-1\\
1 \quad &n+m\le i \le \frac{3n+2m-3}{2}\\
-1\quad &\frac{3n+2m-1}{2}\le i \le 2n-1 \\
\end{dcases}.
$$
Notice that 
\begin{align*}
(\bold{x},\bold{1})&=\sum_{i=1}^{2n+m-2} x(i)\\
&=x(1)+x(n+m-1)+\sum_{i=n+1}^{n+m-2}x(i)+\sum_{i=2}^{\frac{n-1}{2}}x(i)+\sum_{\frac{n+1}{2}}^{n-1} x(i)\\
&+\sum_{i=n+m}^{\frac{3n+m-3}{2}}x(i)+\sum_{\frac{3n+m-1}{2}}^{2n} x(i)\\
&=1+1+0+\left(\frac{n-1}{2}-2+1\right) -\left(n-1-\frac{n+1}{2}+1\right)\\
&+\left(\frac{3n+2m-3}{2}-(n+m)+1\right)-\left(2n-1-\frac{3n+2m-1}{2}+1\right)\\
&=0.
\end{align*}
Hence it's possible to use our test vector to get the upper bound.
\begin{align*}
&\lambda_2(T_n^m)\\
&\le \frac{\bold{x}^T\bold{L}\bold{x}}{\bold{x}^T\bold{x}}\\   
&=\frac{\sum_{(a,b)\in E_{T_n^{m}}} (x(a)-x(b))^2}{\sum_{i=1}^{2n+m-2}x(i)^2}\\
& = \frac{\sum_{1\le i,j\le n} (x(i)-x(j))^2}{\sum_{1\le i\le n} x(i)^2+\sum_{n+1\le i\le n+m-2} x(i)^2+\sum_{n+m-1\le i\le 2n+m-2} x(i)^2}\\
&+ \frac{\sum_{n+1\le i,j\le n+m-2} (x(i)-x(j))^2+(x(1)-x(n+1))^2+(x(n+m-2)-x(n+m-1))^2}{\sum_{1\le i\le n} x(i)^2+\sum_{n+1\le i\le n+m-2} x(i)^2+\sum_{n+m-1\le i\le 2n+m-2}x(i)^2}\\
&+ \frac{\sum_{n+m-1\le i,j\le 2n} (x(i)-x(j))^2}{\sum_{1\le i\le n} x(i)^2+\sum_{n+1\le i\le n+m-2} x(i)^2+\sum_{n+m-1\le i\le 2n}x(i)^2}.
\end{align*}
Notice that the first term is equal to 
\begin{align*}
&\frac{\sum_{2\le j\le n} (x(1)-x(j))^2}{\sum_{1\le i\le n} x(i)^2+\sum_{n+1\le i\le n+m-2} x(i)^2+\sum_{n+m-1\le i\le 2n+m-2} x(i)^2}\\ 
=& \frac{\sum_{2\le j\le n} (x(1)-x(j))^2}{n-1+m-2+n-1}\\
=& \frac{\sum_{j=2}^{\frac{n-1}{2}}(x(1)-x(j))^2+\sum_{j= \frac{n+1}{2}}^{j=n-1}(x(1)-x(j))^2+(x(1)-x(n))^2}{2n+m-4}\\
=&\frac{4\frac{n-1}{2}+1}{2n+m-4}\\
=&\frac{2n-3}{2n+m-4}.
\end{align*}
The second term is equal to 
\begin{align*}
\frac{0+1+1}{2n+m-4}=\frac{2}{2n+m-4}.
\end{align*}
From symmetry, the third term and the second term are the same, so the third term is $\frac{2n-3}{2n+m-4}$. Add all terms together, and we get 
$$
\lambda_2(T_n^m)\le \frac{4n-6}{2n+m-4}.
$$\newline

\textbf{Case 2:}
$n$ is odd, $v^1=1$ and $v^2\neq n+m-1$ or $n$ is odd, $v^1\neq 1$ and $v^2= n+m-1$ \newline
\begin{center}
    \includegraphics[scale = 0.4]{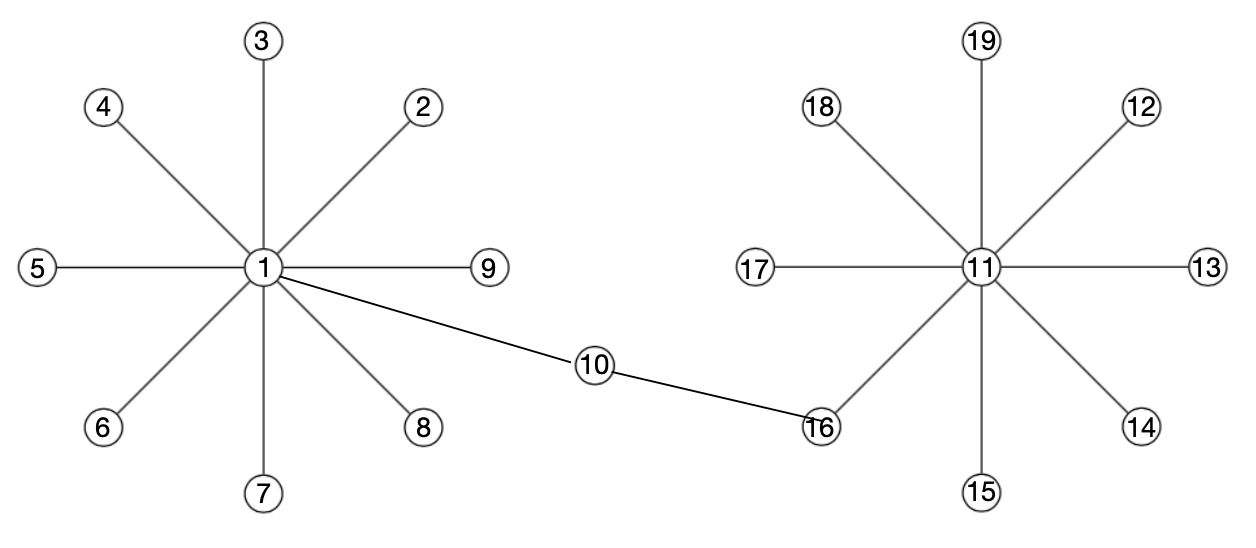}    
\end{center}
We will only discuss when $n$ is odd, $v^1=1$ and $v^2\neq n+m-1$, or $n$ is odd because the other case will get us the same result from symmetry. When we label our vertices, we can make $v^2=n+m$ now. Then we can still use the same test vector as case 1. So the text vector is well defined. The upper bound process will be the same as case 1. Thus we will get the same bound as case 1.\newline

\textbf{Case 3:}
$n$ is odd, $v^1\neq1$ and $v^2\neq n+m-1$ \newline  
\begin{center}
    \includegraphics[scale = 0.4]{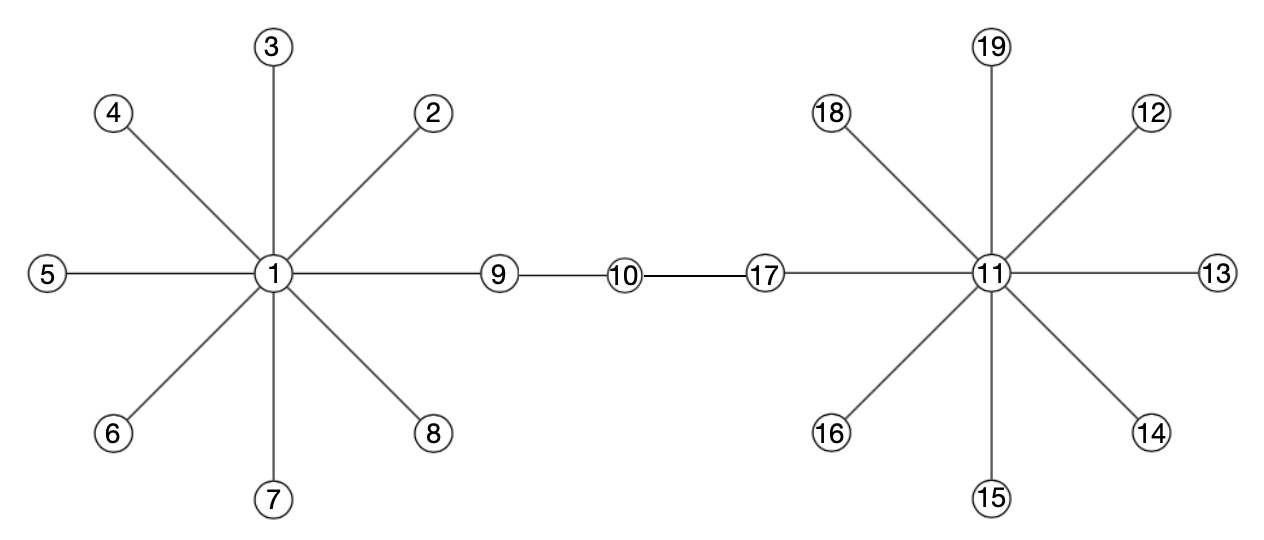}    
\end{center}
When we label the vertices, we can make $v^1=2$ and $v^2=n+m$ now. Then we can still use the same test vector as case 1. So the text vector is well defined. The upper bound process will be the same as case 1 thus we will get the same bound as case 1.\newline

\textbf{Case 4:}
$n$ is even, $v^1=1$ and $v^2=n+m-1$ \newline
\begin{center}
    \includegraphics[scale = 0.4]{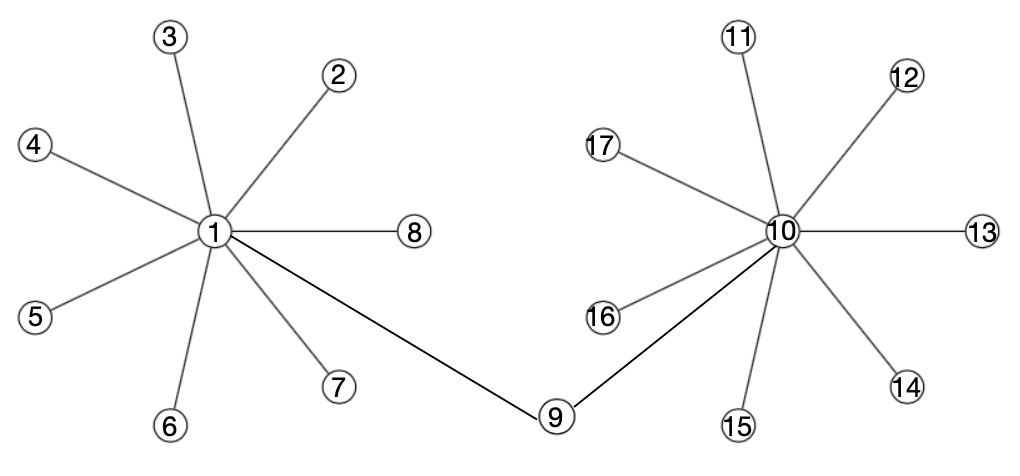}    
\end{center}
We define the test vector as 
$$
\bold{x}(i) = 
\begin{dcases}
1\quad &i=1,n+m-1\\
0\quad & n+1\le i \le n+m-2\\
1 \quad &2\le i \le \frac{n-1}{2}\\
-1 \quad &\frac{n+1}{2}\le i \le n\\
1 \quad &n+m-1\le i \le \frac{3n+2m-3}{2}\\
-1\quad &\frac{3n+2m-1}{2}\le i \le 2n \\
\end{dcases}.
$$
Notice that 
\begin{align*}
(\bold{x},\bold{1})=&=\sum_{i=1}^{2n+m-2} x(i)\\
&=x(1)+x(n+m-1)+\sum_{i=n+1}^{n+m-2}x(i)+\sum_{i=2}^{\frac{n-1}{2}}x(i)\\
&+\sum_{\frac{n+1}{2}}^{n-1} x(i)+\sum_{i=n+m}^{\frac{3n+m-3}{2}}x(i)+\sum_{\frac{3n+m-1}{2}}^{2n} x(i)\\
&=1+1+\left(\frac{n-1}{2}-2+1\right) \\
&-\left(n-1-\frac{n+1}{2}+1\right)+\left(\frac{3n+2m-3}{2}-(n+m)+1\right)\\
&-\left(2n-\frac{3n+2m-1}{2}+1\right)\\
&=0.
\end{align*}
Now we get 
\begin{align*}
&\lambda_2(T_n^m)\\
&\le \frac{\bold{x}^T\bold{L}\bold{x}}{\bold{x}^T\bold{x}}\\   
&=\frac{\sum_{(a,b)\in E_{T_n^{m}}} (x(a)-x(b))^2}{\sum_{i=1}^{2n+m-2}x(i)^2}\\
& = \frac{\sum_{1\le i,j\le n} (x(i)-x(j))^2}{\sum_{1\le i\le n} x(i)^2+\sum_{n+1\le i\le n+m-2} x(i)^2+\sum_{n+m-1\le i\le 2n+m-2} x(i)^2}\\
&+ \frac{\sum_{n+1\le i,j\le n+m-2} (x(i)-x(j))^2+(x(1)-x(n+1))^2+(x(n+m-2)-x(n+m-1))^2}{\sum_{1\le i\le n} x(i)^2+\sum_{n+1\le i\le n+m-2} x(i)^2+\sum_{n+m-1\le i\le 2n+m-2}x(i)^2}.\\
&+ \frac{\sum_{n+m-1\le i,j\le 2n} (x(i)-x(j))^2}{\sum_{1\le i\le n} x(i)^2+\sum_{n+1\le i\le n+m-2} x(i)^2+\sum_{n+m-1\le i\le 2n}x(i)^2}.
\end{align*}
Notice that the first term is equal to 
\begin{align*}
&\frac{\sum_{2\le j\le n} (x(1)-x(j))^2}{\sum_{1\le i\le n} x(i)^2+\sum_{n+1\le i\le n+m-2} x(i)^2+\sum_{n+m-1\le i\le 2n+m-2} x(i)^2}\\ 
=& \frac{\sum_{2\le j\le n} (x(1)-x(j))^2}{n-1+m-2+n-1}\\
=& \frac{\sum_{j=2}^{\frac{n-1}{2}}(x(1)-x(j))^2+\sum_{j= \frac{n+1}{2}}^{j=n-1}(x(1)-x(j))^2}{2n+m-4}\\
=&\frac{4\frac{n}{2}}{2n+m-4}\\
=&\frac{2n}{2n+m-4}.
\end{align*}
From case 1 we know that the second term is $\frac{2}{2n+m-4}$, and from the symmetry, the third term and the second term are the same, so the third term is $\frac{2n}{2n+m-4}$. Adding three terms together gets us
$$
\lambda_2(T_n^m)\le \frac{4n+2}{2n+m-4}.
$$

\textbf{Case 5:}
$n$ is even, $v^1=1$ and $v^2\neq n+m-1$ or $n$ is even, $v^1\neq 1$ and $v^2= n+m-1$ \newline
\begin{center}
    \includegraphics[scale = 0.4]{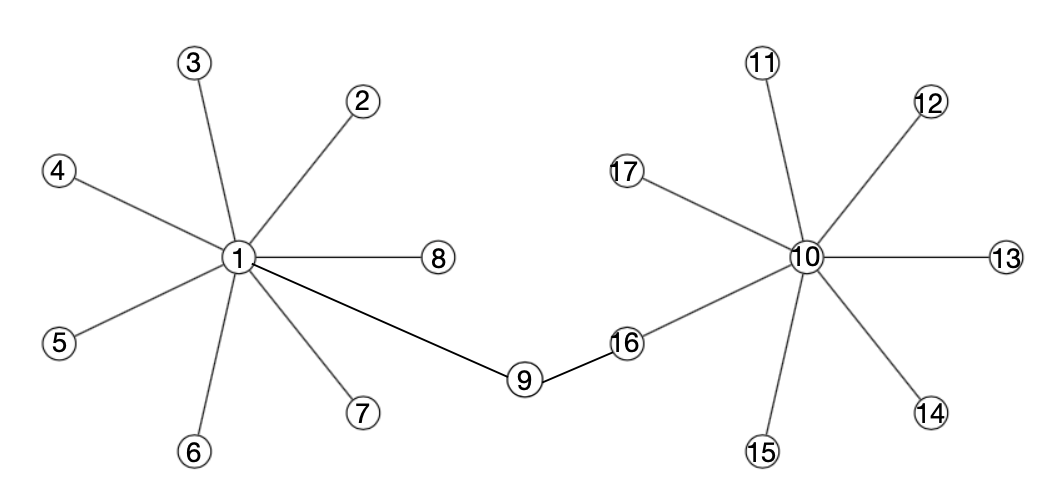}    
\end{center}
We will only discuss when $n$ is odd,$v^1=1$ and $v^2\neq n+m-1$ or $n$ is odd because the other case will get us the same result from symmetry. When we label it we can make $v^2=n+m$ now. Then we can still use the same test vector as case 4. So the text vector is well defined.And the upper bound process will be the same as case 1 thus we will get the same bound as case 1.\newline

\textbf{Case 6:}
$n$ is even, $v^1\neq1$ and $v^2\neq n+m-1$\newline 
\begin{center}
    \includegraphics[scale = 0.4]{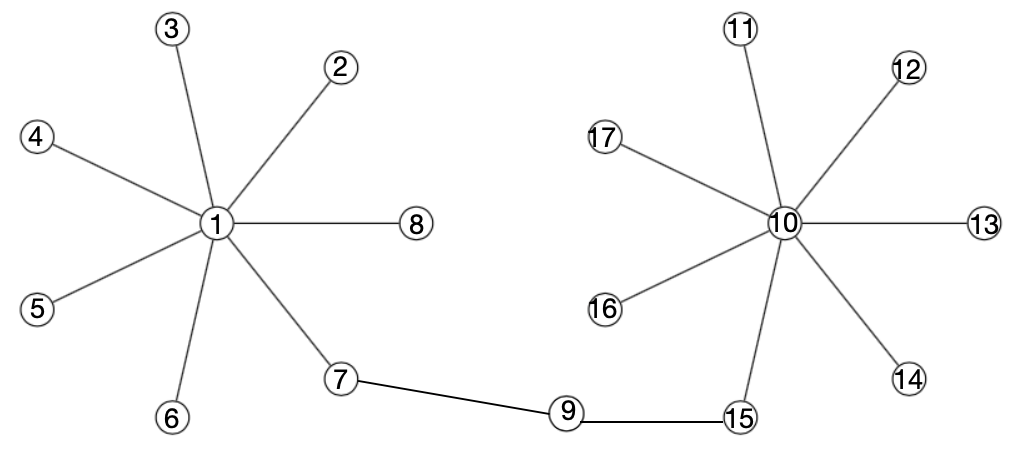}    
\end{center}
When we label it we can make $v^1=2$ and $v^2=n+m$ now. Then we can still use the same test vector as case 4. So the text vector is well defined.And the upper bound process will be the same as case 1 thus we will get the same bound as case 1. \newline

Hence we have finished upper bound since we have exhausted all possible cases. \newline

For the lower bound, we can compare our graphs to complete graphs. For every pair of edge $(a,b)\in E_{S_n^m}$,let the path graph $P_{a,b}$ be a path from $a$ to $b$, and $G_{a,b}$ be a graph with a single edge $(a,b)$. From the lemma we have $|P_{a,b}|P_{a,b} \succcurlyeq G_{a,b}$. We know that if $a$ and $b$ are both in the same  star graph, without loss of generality, we can assume they are both in $S_{n,1}$, Thus, the length from vertex $a$ to $b$ is at most $2$. 

If $a$ and $b$ are in different star graphs, without loss of generality, we suppose $a$ is in $S_{n,1}$ and $b$ is in $S_{n,2}$. Then the length of the path $P_{a,b}$ is at most $2+m-1+2=m+3$.Hence, the length of the path $P_{a,b}$ is at most $2+m-1+2=m+3$. It follows that
\begin{align*}
G_{a,b}\preccurlyeq |P_{a,b}|P_{a,b} & \preccurlyeq(m+3) \\
&\preccurlyeq(m+3)S_n^m.
\end{align*}

Also we know that complete graph $K_{2n+m-2}$ has $\binom{2n+m-2}{2}$ single edges. Thus 
\begin{align*}
K_{2n+m-2}\preccurlyeq \sum_{(a,b)\in E_{K_{2n+m-2}}}G_{a,b}&\preccurlyeq \binom{2n+m-2}{2} G_{a,b} \\
&\preccurlyeq \binom{2n+m-2}{2}(m+3) T_m^n.    
\end{align*}
Hence,
$$
2n+m-2=\lambda_2(K_{2n+m-2}) \le \binom{2n+m-2}{2}(m+3) \lambda_2(S_m^n).
$$
Finally, we arrive at
$$
\lambda_2(S_m^n)\geq \frac{2}{(2n+m-3)(m+3)}.
$$
\end{proof}
\end{theorem}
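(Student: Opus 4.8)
The plan is to prove the two inequalities separately, mirroring the argument used for the dumbbell graphs $D_n^m$. The upper bound comes from the minimax characterization of $\lambda_2$ (Theorems 3.2--3.3): it suffices to produce a test vector $\mathbf{x}\in\mathbb{R}^{2n+m-2}$ with $(\mathbf{x},\mathbf{1})=0$ and to estimate its Rayleigh quotient $\mathbf{x}^T\mathbf{L}\mathbf{x}/\mathbf{x}^T\mathbf{x}$, which by Theorem 3.1 equals $\sum_{(a,b)\in E}(x(a)-x(b))^2 / \sum_i x(i)^2$. The lower bound comes from comparing $S_n^m$, via the Path Inequality, with the complete graph on the same vertex set and then invoking Loewner monotonicity (Theorem 3.5).

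For the upper bound I would first fix a labelling adapted to the bipartite structure: each star $S_{n,j}=K_{1,n-1}$ splits as a hub $c_j$ together with its $n-1$ leaves $L_j$, and the bridging path $P_m$ meets $S_{n,1}$ in a single vertex $v^1$ and $S_{n,2}$ in a single vertex $v^2$, leaving $m-2$ internal path vertices. The test vector assigns $0$ to every internal path vertex and, within each star, assigns $+1$ to about half the leaves and $-1$ to the other half, with the value at the hub and at $v^j$ chosen so that the total over all $2n+m-2$ vertices is zero; checking this is the identity $(\mathbf{x},\mathbf{1})=0$. For the Rayleigh quotient I would split the edge sum into the three blocks $S_{n,1}$, $P_m$, $S_{n,2}$. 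Inside a star, only the hub--leaf edges whose endpoints receive opposite signs contribute, each adding $4$, together with a bounded correction from the edge(s) incident to $v^j$; the path block contributes only a bounded amount since its interior is constant $0$; and the denominator is $2(n-1)$ up to a bounded correction. This produces a quotient of the shape $\frac{4n+c}{2n+m-4}$ with $c$ a small constant. Since $c$ depends on the parity of $n$ (whether the leaves split evenly) and on whether $v^1,v^2$ are hubs or leaves, I would carry out the resulting handful of cases; the left--right reflection symmetry of $S_n^m$ collapses them to one computation for $n$ odd and one for $n$ even, and as the even case gives the larger value $\frac{4n+2}{2n+m-4}$, that value is the uniform bound stated for all $S_n^m$.

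For the lower bound, for each pair $(a,b)$ of vertices let $P_{a,b}$ be a shortest path in $S_n^m$ from $a$ to $b$ and $G_{a,b}$ the single-edge graph on $(a,b)$; the Path Inequality gives $|P_{a,b}|\,P_{a,b}\succcurlyeq G_{a,b}$. Two vertices of the same star are at distance at most $2$ (leaf--hub--leaf), and two vertices in different stars are at distance at most $2+(m-1)+2=m+3$, so $|P_{a,b}|\le m+3$; since $P_{a,b}$ is a subgraph of $S_n^m$, Theorem 3.6 gives $P_{a,b}\preccurlyeq S_n^m$, hence $G_{a,b}\preccurlyeq (m+3)\,S_n^m$. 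Summing over all $\binom{2n+m-2}{2}$ edges of $K_{2n+m-2}$ on the vertex set of $S_n^m$ yields $K_{2n+m-2}\preccurlyeq \binom{2n+m-2}{2}(m+3)\,S_n^m$, so by Theorem 3.5 and the known value $\lambda_2(K_N)=N$ we obtain $2n+m-2\le \binom{2n+m-2}{2}(m+3)\,\lambda_2(S_n^m)$, which rearranges to $\lambda_2(S_n^m)\ge \frac{2}{(2n+m-3)(m+3)}$. Strict positivity $\lambda_2(S_n^m)>0$ also follows directly from Theorem 3.4 since $S_n^m$ is connected.

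The main obstacle is the bookkeeping in the upper bound: keeping the test vector exactly orthogonal to $\mathbf{1}$ while correctly tallying the edge contributions at the two junctions, whose behaviour changes with the parity of $n$ and with whether the bridge is attached at a hub or at a leaf, and then remembering to report the worst (even-$n$) case so the bound is uniform in $n$. Once the distance estimate $|P_{a,b}|\le m+3$ is established, the lower bound is routine.
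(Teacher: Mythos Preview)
Your proposal is correct and follows essentially the same route as the paper: a $\{0,\pm1\}$ test vector constant on the path interior and splitting each star's leaves into near-halves, with cases according to the parity of $n$ and whether the bridge attaches at a hub or a leaf (the even-$n$ case giving the worst constant $\tfrac{4n+2}{2n+m-4}$), together with the Path Inequality comparison to $K_{2n+m-2}$ via the diameter bound $m+3$ for the lower bound. The only difference is presentational---the paper writes out six labelled cases explicitly whereas you sketch the symmetry reduction---but the underlying argument is the same.
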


\begin{section}{$T_n$ Type Bridge Graphs}
Now we will discuss binary tree-like graphs $T_n^m$, which are formed by by joining two full binary trees with $n$ vertices, $T_{n,1}$ and $T_{n,2}$, with a path graph $P_m$. Notice that this is also a simple example of a bridge graph. \\

\begin{theorem}
For the binary tree-like graphs $T_n^m$ we mentioned above we have the following bound on the eigenvalues:
$$
\frac{2}{(2n+m-1)(2 \log_2(n+1)+m-3)}\le \lambda_2{(T_n^m)}\le \frac{5}{2(n-1)}.
$$

\begin{proof}
Now we need to label $T_n^m$. We label $T_{n,1}$ the following way. We label the vertex which is ancestor of $T_{n,1}$ all other vertices as $1$. Then $1$ has two children. We label them as $2$ and $3$,then we label children of $2$ as $4$ and $5$, the children of $3$ as $6$ and $7$ and so on until $n$. \newline

Then we label the path $P_m$. We know one end of $P_m$ is $i$ where $1\le i\le n$, then we label the vertex which is on the path $P_m$ and attached to $0$ as $n+1$ repeat the process until the vertex $n+m-2$, then the next vertex which in $P_m$ and attach to $n+m-2$ is on the graph $T_{n,2}$. \newline

Then we label the graph $T_{n,2}$. We label the vertex which is ancestor of all other vertices of $T_{n,2}$ as $n+m-1$. Then $n+m-1$ has two children. We label them as $n+m$ and $n+m+1$,then we label children of $n+m$ as$n+m+1$ and $n+m+2$, the children of $n+m+1$ as $n+m+3$ and $n+m+4$ and so on until $2n+m-2$. Now we need to break into 3 cases depends on where the ends of $P_m$ locate at. We know that one end is between $1$ and $n$ and the other is between $n+m-1$  and $2n+m-2$. \\

\textbf{Case 1:} One end of $P_m$ is $1$ and the other end of $P_m$ is $n+m-2$. Figure $2$ demonstrates the case of $T_{7}^3$:
\begin{center}
\includegraphics[scale = 0.5]{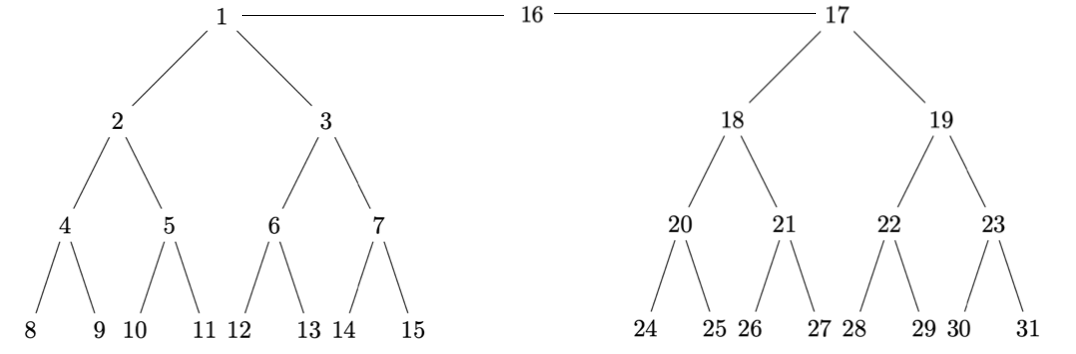}
\end{center}
We can set the test vector to be:
$$
\bold{x}(i) = 
\begin{dcases}
0\quad &i=1,n+m-1\\
0\quad &n+1\le i \le n+m-2\\
1 \quad &i=2,n+m\\
1 \quad &2<i\le n \text{ and } i  \text{ is a descendant of } 2\\
1 \quad &n+m<i\le 2n+m-2 \text{ and } i \text{ is a descendant of } n+m\\
-1\quad & \text{otherwise}
\end{dcases}.
$$
We notice that for elements of $T_{n,1}$ the number of $1$ is $\frac{n-1}{2}$ and the number of $-1$ is $\frac{n-1}{2}$. For elements of $T_{n,1}$ the number of $1$ is $\frac{n-1}{2}$ and the number of $-1$ is $\frac{n-1}{2}$. Hence we have 
\begin{align*}
(\bold{x},\bold{1})&=\sum_{i=1}^{2n+m-2} x(i)\\
&=x(1)+x(n+m-1) + \sum_{i=2}^{n}x(i)+\sum_{i=n+m}^{2n+m-2} x(i)\\
&=0+0+\frac{n-1}{2}-\frac{n-1}{2}+\frac{n-1}{2}-\frac{n-1}{2}\\
&=0.
\end{align*}
We have finished verifying $(\bold{x},\bold{1})=0$. Now we estimate the upper bound of $\lambda_2(T_n^m)$:
\begin{align*}
\lambda_2(T_n^m)&\le \frac{\bold{x}^T\bold{L}\bold{x}}{\bold{x}^T\bold{x}}\\   
&=\frac{\sum_{(a,b)\in E_{T_n^{m}}} (x(a)-x(b))^2}{\sum_{i=1}^{2n+m-2}x(i)^2}\\
& = \frac{\sum_{1\le i,j\le n} (x(i)-x(j))^2}{\sum_{1\le i\le n} x(i)^2+\sum_{n+1\le i\le n+m-2} x(i)^2+\sum_{n+m-1\le i\le 2n+m-2} x(i)^2}\\
&+ \frac{\sum_{n+1\le i,j\le n+m-2} (x(i)-x(j))^2+(x(1)-x(n+m-1))^2}{\sum_{1\le i\le n} x(i)^2+\sum_{n+1\le i\le n+m-2} x(i)^2+\sum_{n+m-1\le i\le 2n+m-2}x(i)^2}.\\
&+ \frac{\sum_{n+m-1\le i,j\le 2n} (x(i)-x(j))^2}{\sum_{1\le i\le n} x(i)^2+\sum_{n+1\le i\le n+m-2} x(i)^2+\sum_{n+m-1\le i\le 2n}x(i)^2}.
\end{align*}
Notice that the first term is 
\begin{align*}
&=\frac{ (x(1)-x(2))^2+(x(2)-x(3))^2}{\sum_{1\le i\le n} x(i)^2+\sum_{n+1\le i\le n+m-2} x(i)^2+\sum_{n+m-1\le i\le 2n+m-1}x(i)^2}\\
&=\frac{2}{n-1+0+n-1}\\
&=\frac{1}{n-1}
\end{align*}
The second term is $0$, and the third term is 
\begin{align*}
&=\frac{ (x(n+m-1)-x(n+m))^2+(x(n+m-1)-x(n+m+1))^2}{\sum_{1\le i\le n} x(i)^2+\sum_{n+1\le i\le n+m-2} x(i)^2+\sum_{n+m-1\le i\le 2n+m-1}x(i)^2}\\
&=\frac{2}{n-1+0+n-1}\\
&=\frac{1}{n-1}.
\end{align*}
Adding all the three terms, we get 
$$
\lambda_2({T_n^m})\le \frac{1}{n-1}+\frac{1}{n-1}=\frac{2}{n-1}.
$$\newline
\textbf{Case 2:} One end of $P_m$ is $1$, and the other end of $P_m$ is $J$ where $n+m\le J \le 2n$ or one end of  $P_m$ is $n+m-1$ and the other end of $P_m$ is $K$ where $1\le K \le n$. Without loss of generality we only discuss when one end of $P_m$ is $1$ and the other end of $P_m$ is $J$ because the other case follows by an identical argument. Figure $3$ demonstrates $T_7^3$ in this case:
\begin{center}
\includegraphics[scale = 0.5]{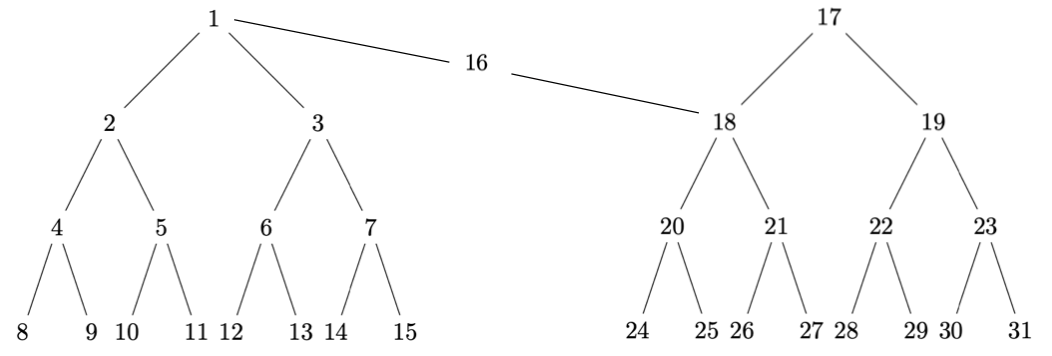}
\end{center}

We can set the test vector to be same as case 1. Since we already know from case 1 that $(\bold{x},\bold{1})=0$,
\begin{align*}
\lambda_2(T_n^m)&\le \frac{\bold{x}^T\bold{L}\bold{x}}{\bold{x}^T\bold{x}}\\   
&=\frac{\sum_{(a,b)\in E_{T_n^{m}}} (x(a)-x(b))^2}{\sum_{i=1}^{2n+m-2}x(i)^2}\\
& = \frac{\sum_{1\le i,j\le n} (x(i)-x(j))^2}{\sum_{1\le i\le n} x(i)^2+\sum_{n+1\le i\le n+m-2} x(i)^2+\sum_{n+m-1\le i\le 2n+m-2} x(i)^2}\\
&+ \frac{\sum_{n+1\le i,j\le n+m-2+(x(1)-x(J))^2} (x(i)-x(j))^2}{\sum_{1\le i\le n} x(i)^2+\sum_{n+1\le i\le n+m-2} x(i)^2+\sum_{n+m-1\le i\le 2n+m-2}x(i)^2}\\
&+ \frac{\sum_{n+m-1\le i,j\le 2n+m-2} (x(i)-x(j))^2}{\sum_{1\le i\le n} x(i)^2+\sum_{n+1\le i\le n+m-2} x(i)^2+\sum_{n+m-1\le i\le 2n+m-2}x(i)^2}.
\end{align*}
Notice that that the second term is 
$$
\frac{0+1}{n-1+n-1}=\frac{1}{2(n-1)}.
$$
We have calculated the first term and the third term in case 1. Hence
$$
\lambda_2({T_n^m})\le \frac{1}{n-1}+\frac{1}{2(n-1)}+\frac{1}{n-1}=\frac{5}{2(n-1)}.
$$\newline
\textbf{Case 3:} One end of $P_m$ is $J$ where $2\le J \le n$ and the other end of $P_m$ is $K$ where $n+m\le K \le 2n$. Figure $4$ demonstrates $T_7^3$ in this case:
\begin{center}
\includegraphics[scale = 0.5]{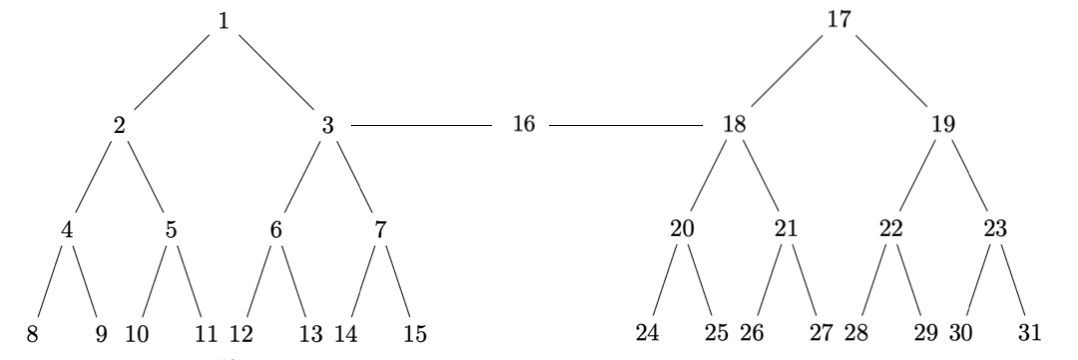}
\end{center}

Similar to before, we can set the test vector to be
$$
\bold{x}(i) = 
\begin{dcases}
0\quad &i=1,n+m-1\\
0\quad &n+1\le i \le n+m-2\\
1 \quad &i=J,K\\
1 \quad &2<i\le n \text{ and } i  \text{ is a descendant of } J  \\
1 \quad &2<i\le n \text{ and } i  \text{ is a ancestor of } J  \\
1 \quad &i<n+m\le n \text{ and } i \text{ is a descendant of } K\\
1 \quad &i<n+m\le n \text{ and } i \text{ is a ancestor of } K\\
-1\quad & \text{otherwise}
\end{dcases}.
$$
We notice that for elements of $T_{n,1}$, the number of $1$'s in $\bold{x}$ is $\frac{n-1}{2}$ and the number of $-1$'s in $\bold{x}$ is $\frac{n-1}{2}$. For elements of $T_{n,1}$ the number of $1$'s in $\bold{x}$ is $\frac{n-1}{2}$ and the number of $-1$'s in $\bold{x}$ is $\frac{n-1}{2}$. Hence we have 
\begin{align*}
(\bold{x},\bold{1})&=\sum_{i=1}^{2n+m-2} x(i)\\
&=x(1)+x(n+m-1) + \sum_{i=2}^{n}x(i)+\sum_{i=n+m}^{2n+m-2} x(i)\\
&=0+0+\frac{n-1}{2}-\frac{n-1}{2}+\frac{n-1}{2}-\frac{n-1}{2}\\
&=0.
\end{align*}
Now we can estimate the upper bound of second eigenvalue
\begin{align*}
\lambda_2(T_n^m)&\le \frac{\bold{x}^T\bold{L}\bold{x}}{\bold{x}^T\bold{x}}\\   
&=\frac{\sum_{(a,b)\in E_{T_n^{m}}} (x(a)-x(b))^2}{\sum_{i=1}^{2n+m-2}x(i)^2}\\
& = \frac{\sum_{1\le i,j\le n} (x(i)-x(j))^2}{\sum_{1\le i\le n} x(i)^2+\sum_{n+1\le i\le n+m-2} x(i)^2+\sum_{n+m-1\le i\le 2n+m-2} x(i)^2}\\
&+ \frac{\sum_{n+1\le i,j\le n+m-2} (x(i)-x(j))^2+(x(J)-x(K)^2)}{\sum_{1\le i\le n} x(i)^2+\sum_{n+1\le i\le n+m-2} x(i)^2+\sum_{n+m-1\le i\le 2n+m-2}x(i)^2}.\\
&+ \frac{\sum_{n+m-1\le i,j\le 2n} (x(i)-x(j))^2}{\sum_{1\le i\le n} x(i)^2+\sum_{n+1\le i\le n+m-2} x(i)^2+\sum_{n+m-1\le i\le 2n+m-2}x(i)^2}.
\end{align*}
Notice that the second term is 
$$
\frac{0}{n-1+n-1}=0.
$$
The first term and the second term calculation is basically the same as case 1, and the result is the same too. It follows that
$$
\lambda_2(T_n^m)\le \frac{1}{n-1}+0+\frac{1}{n-1}=\frac{2}{(n-1)},
$$
which gives us our upper bound estimation. \newline 

For the lower bound, we can compare our graphs to complete graphs. For every pair of edge $(a,b)\in E_{T_n^m}$,let the path graph $P_{a,b}$ be a path from $a$ to $b$, and $G_{a,b}$ be a graph with a single edge $(a,b)$, then from the lemma we have $|P_{a,b}|P_{a,b} \succcurlyeq G_{a,b}$. We know that if $a$ and $b$ are both in the same binary tree, without loss of generality we assume they are both in $P_{n,1}$,from the definition of full binary tree we know that the length from the vertex $1$ to the vertices which have no children is $\log_2{(n+1)}-1$, so the length from vertex $a$ to $b$ is at most $2log_2{(n+1)}-2$. 

If $a$ and $b$ are in different binary trees, without loss of generality, we suppose $a$ is in $T_{n,1}$ and $b$ is in $T_{n,2}$ then the length of the path $P_{a,b}$ is the longest when $a$ and $b$ are the vertices which have no descendants. Hence the length of the path $P_{a,b}$ is at most $2\log_2{(n+1)}+m-3$. That is, $|P_{a,b}|\le 2\log_2(n+1)+m-3$.
It follows that
\begin{align*}
G_{a,b}\preccurlyeq |P_{a,b}|P_{a,b} & \preccurlyeq((2\log_2(n+1)+m-3)P_{a,b} \\
&\preccurlyeq(2\log_2(n+1)+m-3)T_n^m.
\end{align*}

Also we know that complete graph $K_{2n+m-2}$ t has $\binom{2n+m-2}{2}$ single edges. Thus 
\begin{align*}
K_{2n+m-2}\preccurlyeq \sum_{(a,b)\in E_{K_{2n+m-2}}}G_{a,b}&\preccurlyeq \binom{2n+m-2}{2} G_{a,b} \\
&\preccurlyeq \binom{2n+m-2}{2}(2log_2(n+1)+m-3) T_m^n.    
\end{align*}
Hence,
$$
2n+m-2=\lambda(K_{2n+m-2}) \le \binom{2n+m-2}{2}(2log_2(n+1)+m-3) \lambda(T_m^n).
$$
From the above, we conclude $\lambda_2(T_m^n)\geq \frac{2}{(2n+m-1)(2log_2(n+1)+m-3}$.
\end{proof}
\end{theorem}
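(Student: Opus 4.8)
The plan is to prove the two inequalities separately, following the template already used for $D_n^m$ and $S_n^m$: a Rayleigh-quotient (test-vector) estimate for the upper bound and a Loewner-order comparison with a complete graph for the lower bound. First I would fix coordinates on $T_n^m$: enumerate $T_{n,1}$ by breadth-first search from its root $1$ (children of $1$ are $2,3$; children of $2,3$ are $4,5,6,7$; and so on up to $n$), then label the interior path vertices $n+1,\dots,n+m-2$, then enumerate $T_{n,2}$ by the same rule from its root $n+m-1$, ending at $2n+m-2$. The structural fact that drives the whole argument is that a full binary tree on $n$ vertices is, below its root, the disjoint union of two isomorphic subtrees with $(n-1)/2$ vertices each; this is what forces the test-vector orthogonality to hold automatically.

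\textbf{Upper bound.} I would take the test vector $\mathbf{x}$ that is $0$ at the two roots and at every interior path vertex, $+1$ on one root-subtree of $T_{n,1}$ and on one root-subtree of $T_{n,2}$, and $-1$ on the two remaining root-subtrees. By the balanced-subtree fact $(\mathbf{x},\mathbf{1})=0$, so the Rayleigh characterization (Theorem 3.2) applies. In $\mathbf{x}^{T}L\mathbf{x}=\sum_{(a,b)\in E}(x(a)-x(b))^2$ the only nonzero terms come from edges across which $x$ changes value: inside each tree these are exactly the two root-to-child edges, contributing $2$ per tree, and along the path an extra unit appears only when a path endpoint is a non-root tree vertex (value $\pm1$) adjacent to an interior path vertex (value $0$). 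Since $\mathbf{x}^{T}\mathbf{x}=2(n-1)$, I would run the three attachment cases: both path-ends at roots, one end at a root and one at a non-root, both ends at non-roots. In the all-root case and the both-non-root case the numerator is $4$ (in the latter one arranges both endpoints to get the same sign so the connecting contribution vanishes), giving $\lambda_2(T_n^m)\le \tfrac{2}{n-1}$; in the mixed case the numerator is $5$, giving $\lambda_2(T_n^m)\le \tfrac{5}{2(n-1)}$. Taking the worst configuration yields the stated upper bound $\tfrac{5}{2(n-1)}$.

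\textbf{Lower bound.} For the lower bound I would use the Path Inequality. For any two vertices $a,b$, the unique $a$--$b$ path in $T_n^m$ has length at most the diameter of $T_n^m$; since a full binary tree on $n$ vertices has height $\log_2(n+1)-1$, the diameter is at most two root-to-leaf distances plus the bridge, i.e. $2(\log_2(n+1)-1)+(m-1)=2\log_2(n+1)+m-3$. Hence $G_{a,b}\preccurlyeq |P_{a,b}|\,P_{a,b}\preccurlyeq (2\log_2(n+1)+m-3)\,T_n^m$ for every pair. Summing over the $\binom{2n+m-2}{2}$ edges of $K_{2n+m-2}$ gives $K_{2n+m-2}\preccurlyeq \binom{2n+m-2}{2}(2\log_2(n+1)+m-3)\,T_n^m$, and then Theorem 3.5 with $\lambda_2(K_{2n+m-2})=2n+m-2$ yields $\lambda_2(T_n^m)\ge \dfrac{2n+m-2}{\binom{2n+m-2}{2}(2\log_2(n+1)+m-3)}=\dfrac{2}{(2n+m-3)(2\log_2(n+1)+m-3)}$, which is at least the claimed bound; strict positivity also follows directly from connectedness of $T_n^m$ via Theorem 3.4.

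\textbf{Main obstacle.} The lower bound is essentially routine once the diameter estimate is in place. The delicate part is the upper-bound case analysis, and there the subtlety is not the orthogonality check but the bookkeeping of the numerator: one must verify that inside each tree no edge other than the two root-to-child edges contributes, and, when a path endpoint is an interior tree vertex, that the $\pm1$ pattern can always be chosen so that at most one path-endpoint edge contributes --- otherwise each additional ``bad'' endpoint adds $\tfrac{1}{2(n-1)}$ and the bound would exceed $\tfrac{5}{2(n-1)}$. A minor caveat is that the height identity $\log_2(n+1)-1$ (and hence the diameter bound) presumes $n=2^{h+1}-1$, i.e. a genuine full binary tree.
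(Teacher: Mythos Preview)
Your overall approach --- test vector for the upper bound, path inequality plus comparison with $K_{2n+m-2}$ for the lower bound --- is exactly the paper's. The lower-bound computation is fine (indeed you get $\frac{2}{(2n+m-3)(2\log_2(n+1)+m-3)}$, which is a hair sharper than the stated $\frac{2}{(2n+m-1)(\cdots)}$).

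The gap is precisely the one you flag as the ``main obstacle,'' and your proposed resolution of it does not work in Case~3 when $m\ge 3$. With the Case~1 test vector, the interior path vertices $n+1,\dots,n+m-2$ carry the value $0$, so even if you arrange $x(J)=x(K)=+1$, the two path-endpoint edges $(J,n+1)$ and $(n+m-2,K)$ each contribute $(1-0)^2=1$. The numerator is then $4+2=6$, not $4$, and the Rayleigh quotient is $\frac{3}{n-1}>\frac{5}{2(n-1)}$. Your sentence ``the connecting contribution vanishes'' is only correct when $m=2$ (no interior path vertices, so $J$ and $K$ are directly adjacent). For general $m$ you cannot force ``at most one path-endpoint edge contributes'' with the plain $\pm1$-on-root-subtrees vector.

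The paper does not reuse the Case~1 vector in Case~3; it switches to a different test vector tailored to $J$ and $K$ (roughly: put $+1$ on the root-subtree path through $J$ in $T_{n,1}$ and through $K$ in $T_{n,2}$, $-1$ elsewhere, $0$ on roots and interior path), so that the values at $J,K$ and along the bridge are aligned and the path contribution drops out. That is the missing ingredient in your Case~3. Once you see that the vector must be modified there, the rest of your write-up goes through.
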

We notice that when $m=2$ the graph $T_n^2$ is connected by a single edge. Now we are doing the same thing as we did for the complete graphs. When graphs $T_{n,1}$ and $T_{n,2}$ are connected by $k$ different single edges $e_1,e_2,\dots e_k$ we get the graph $T_n^{2\times k}$. An example for $k = 3$ is given below:
\begin{center}
\includegraphics[scale = 0.5]{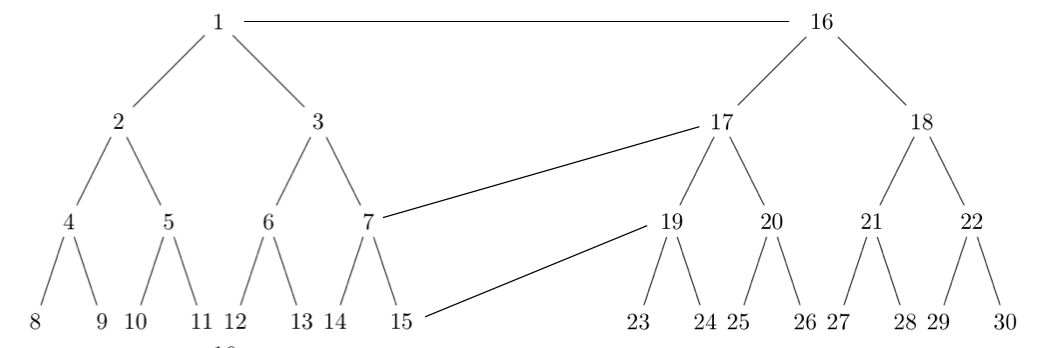}
\end{center}
We now have following theorem.

\begin{theorem}
For the graph $T_n^{2\times k}$ described above, we have the following bound:
$$
\frac{2}{(2n+1)(2\log_2(n-1)+1)}\le \lambda_2(T_n^{2\times k})\le \frac{2m+2}{n-1}+\chi_{k\geq n-1}\frac{-3m+3n-3}{2(n-1)}
$$
where $\chi$ denotes the characteristic function.
\begin{proof}
We know that $T_n^2$ is a subgraph of $T_n^{2\times k}$ so we have $T_n^2\preccurlyeq T_n^{2\times k}$.Hence $$\frac{2}{(2n+1)(2\log_2(n-1)+1)}\le \lambda_2({T_n^2})\le \lambda_2(T_n^{2\times k})
$$
Thus we finished the lower bound. \newline

For the upper bound we still use test vector. Now we notice the graph $T_{n,1}$ contains three different sets of vertices. One set, $V_{1,1}$, only contain the vertex $1$, the second set, $V_{1,2}$, contains vertex $2$ and all of it's descendants. The third set, $V_{1,3}$, contains vertex $3$ and its descendants. The graph $T_{n,2}$ also contains three different vertices sets. One set, $V_{2,1}$, only contains the vertex $n+1$, the second set, $V_{2,2}$, contains vertex $n+2$ and all of it's descendants. The third set, $V_{2,3}$, contains vertex $n+3$ and all of its descendants. \newline

We know that edges $e_1,e_2,\ldots e_k$ contain $k$ vertices in $T_{n,1}$ and $k$ different vertices in $T_{n,2}$. For those $k$ vertices in $T_{n,1}$ we know that they might be in vertex set $V_{1,1}$ or $V_{1,2}$ or $V_{1,2}$. We also know that there are at most one vertex in $V _{1,1}$. Hence when $k\ge 2$ there are one or more  vertices in either $V_{1,2}$ or $V_{1,3}$. \newline

Without loss of generality, assume there are more vertices in set $V_2$. And for graph $T_{n,2}$ we also assume there are more vertices in $V_{2,2}$ We set the test vector to be 
$$
\bold{x}(i) = 
\begin{dcases}
0\quad &i=1,n+1\\
1 \quad &i\in V_{1,2} \text{ or } i\in V_{2,2}\\
-1\quad & \text{otherwise}
\end{dcases}.
$$
We notice that the test vector here is basically the same when we define the test vector for the graph $T_n^m$. For elements of $T_{n,1}$, the number of occurrences of $1$ is $\frac{n-1}{2}$ and the number of occurrences of $-1$ is $\frac{n-1}{2}$. For elements of $T_{n,1}$ the number of occurrences of $1$ is $\frac{n-1}{2}$ and the number of occurrences of $-1$ is $\frac{n-1}{2}$. Hence we have 
\begin{align*}
(\bold{x},\bold{1})&=\sum_{i=1}^{2n} x(i)\\
&=x(1)+x(n+2) + \sum_{i\in V_{1,2}}x(i)+\sum_{i\in V_{2,2}}^ x(i)+\sum_{i\in V_{1,3}}x(i)+\sum_{i\in V_{2,3}}x(i)\\
&=0+0+\frac{n-1}{2}+\frac{n-1}{2}-\frac{n-1}{2}-\frac{n-1}{2}\\
&=0.
\end{align*}
We have finished verifying $(\bold{x},\bold{1})=0$. Now we can try to bound $\lambda_2(T_n^m)$:
\begin{align*}
\lambda_2(T_n^{2\times k})&\le \frac{\bold{x}^T\bold{L}\bold{x}}{\bold{x}^T\bold{x}}\\   
&=\frac{\sum_{(a,b)\in E_{T_n^{2\times k}}} (x(a)-x(b))^2}{\sum_{i=1}^{2n}x(i)^2}\\
& = \frac{\sum_{1\le i,j\le n} (x(i)-x(j))^2}{\sum_{1\le i\le n} x(i)^2+\sum_{n+m-1\le i\le 2n} x(i)^2}\\
&+ \frac{\sum_{(i,j)\in {\{e_1,e_2,\dots e_k\}}} (x(i)-x(j))^2}{\sum_{1\le i\le n} x(i)^2+\sum_{n+m-1\le i\le 2n}x(i)^2}\\
&+ \frac{\sum_{n+m-1\le i,j\le 2n} (x(i)-x(j))^2}{\sum_{1\le i\le n} x(i)^2+\sum_{n+m-1\le i\le 2n}x(i)^2}.
\end{align*}
Notice that the first term is 
\begin{align*}
&\frac{\sum_{1\le i,j\le n} (x(i)-x(j))^2}{\sum_{1\le i\le n} x(i)^2+\sum_{n+m-1\le i\le 2n} x(i)^2}\\
=&\frac{ (x(1)-x(2))^2+(x(2)-x(3))^2}{\sum_{1\le i\le n} x(i)^2+\sum_{n+1\le i\le n+1} x(i)^2+\sum_{n+1\le i\le 2n}x(i)^2}\\
=&\frac{2}{n-1+n-1}\\
=&\frac{1}{n-1}.
\end{align*}
The second term is
\begin{align*}
\frac{\sum_{(i,j)\in {\{e_1,e_2,\dots e_k\}}} (x(i)-x(j))^2}{\sum_{1\le i\le n} x(i)^2+\sum_{n+m-1\le i\le 2n}x(i)^2}
&=\frac{\sum_{i\in V_{1,1} \text{ or } j\in V_{2,1}}(x(i)-x(j))^2}{\sum_{1\le i\le n} x(i)^2+\sum_{n+m-1\le i\le 2n}x(i)^2}\\
&+\frac{\sum_{i\in V_{1,2}, j\in V_{2,2}(x(i)-x(j))^2}}{\sum_{1\le i\le n} x(i)^2+\sum_{n+m-1\le i\le 2n}x(i)^2}\\
&+\frac{\sum_{i\in V_{1,2}, j\in V_{2,3}(x(i)-x(j))^2}}{\sum_{1\le i\le n} x(i)^2+\sum_{n+m-1\le i\le 2n}x(i)^2}\\
&+\frac{\sum_{i\in V_{1,3}, j\in V_{2,2}(x(i)-x(j))^2}}{\sum_{1\le i\le n} x(i)^2+\sum_{n+m-1\le i\le 2n}x(i)^2}\\
&+\frac{\sum_{i\in V_{1,3}, j\in V_{2,3}(x(i)-x(j))^2}}{\sum_{1\le i\le n} x(i)^2+\sum_{n+m-1\le i\le 2n}x(i)^2}\\
\end{align*}
We know that there are at most $\frac{n-1}{2}$ vertices which have nonzero value connected to vertex $1$ and at most $\frac{n-1}{2}$ vertices which have nonzero value connected to vertex $n+1$. Hence we have
$$
\frac{\sum_{i\in V_{1,1} \text{ or } j\in V_{2,1}}(x(i)-x(j))^2}{\sum_{1\le i\le n} x(i)^2+\sum_{n+m-1\le i\le 2n}x(i)^2} \le \frac{2(n-1)(0-1)^2}{2(n-1}=1.
$$
The second term in the sum above is $0.$ We know that $i=j\le \frac{n-1}{2}$, where $i\in V_{1,2},j\in V_{2,3}$. Thus, the third term in the sum above obeys
$$
\frac{\sum_{i\in V_{1,2}, j\in V_{2,3}}(x(i)-x(j))^2}{\sum_{1\le i\le n} x(i)^2+\sum_{n+m-1\le i\le 2n}x(i)^2} < \frac{(n-1)(1+1)^2}{2(n-1)}=2.
$$
We know that  $i=j\le \frac{n-1}{2}$ where $i\in V_{1,3},j\in V_{2,2}$, Thus, the fourth term in the sum above satisfies 
$$
\frac{\sum_{i\in V_{1,3}, j\in V_{2,2}}(x(i)-x(j))^2}{\sum_{1\le i\le n} x(i)^2+\sum_{n+m-1\le i\le 2n}x(i)^2} < \frac{(n-1)(-1-1)^2}{2(n-1)}=2.
$$
Lastly, the fifth term in the sum above is $0$.\\

We can also see that 
\begin{align*}
&\frac{\sum_{n+m-1\le i,j\le 2n} (x(i)-x(j))^2}{\sum_{1\le i\le n} x(i)^2+\sum_{n+m-1\le i\le 2n}x(i)^2}\\
=&\frac{ (x(n+1)-x(n+2))^2+(x(n+1)-x(n+3))^2}{\sum_{1\le i\le n} x(i)^2+\sum_{n+1\le i\le 2n}x(i)^2}\\
=&\frac{2}{n-1+n-1}\\
=&\frac{1}{n-1}.
\end{align*}
But we don't need to add above terms when $k$ is relatively small. We can make the inequality tighter depending on the value of $k$. We notice that when $k\le n-1$ the $\lambda_2({T_n^{2\times k}})$ takes the greatest value when $i\in V_{1,3},j\in V_{2,2}$ or $i\in V_{1,2}, j\in V_{2,3}$. Hence, we actually have 
$$
\lambda_2({T_n^{2\times k}})\le \frac{m(-1-1)^2}{2(n-1)}+\frac{2}{n-1}=\frac{2m+2}{n-1}
$$
when $k> n-1$. Also, $\lambda_{T_n^{2\times k}}$ takes the greatest value when $i\in V_{1,3},j\in V_{2,2}$ or $i\in V_{1,2}, j\in V_{2,3}$. It's follows that
\begin{align*}
\lambda_{T_2(n^{2\times k}})&\le 2+ \frac{(m-(n-1))(0+-1)^2}{2(n-1)}+\frac{2}{n-1} \\
&=\frac{m+3n+1}{2(n-1)} \\
&=\frac{2m+2}{n-1}+\frac{-3m+3n-3}{2(n-1)}.
\end{align*}

From the above we get that 
\begin{align*}
\lambda_{T_n^{2\times k}}\le \frac{2m+2}{n-1}+\chi_{k\geq n-1}\frac{-3m+3n-3}{2(n-1)}
\end{align*}
\end{proof}
\end{theorem}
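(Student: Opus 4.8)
The plan is to prove the two inequalities separately. The lower bound is essentially free from monotonicity: $T_n^{2}$, two full binary trees on $n$ vertices joined by a single edge, is a spanning subgraph of $T_n^{2\times k}$, so Theorem 3.6 gives $T_n^{2}\preccurlyeq T_n^{2\times k}$ and then Theorem 3.5 gives $\lambda_2(T_n^{2})\le\lambda_2(T_n^{2\times k})$. Invoking the lower bound of Theorem 6.1 at $m=2$ then yields $\lambda_2(T_n^{2\times k})\ge \frac{2}{(2n+1)(2\log_2(n-1)+1)}$ with no further work.

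For the upper bound I would build a test vector and apply the Courant--Fischer characterization of Theorem 3.2. Label $T_{n,1}$ so that its root is $1$ with children $2$ and $3$, and $T_{n,2}$ so that its root is $n+1$ with children $n+2$ and $n+3$; write $V_{1,1}=\{1\}$, let $V_{1,2}$ be the subtree hanging off $2$, $V_{1,3}$ the subtree hanging off $3$, and similarly $V_{2,1},V_{2,2},V_{2,3}$. Fullness of the trees gives $|V_{1,2}|=|V_{1,3}|=|V_{2,2}|=|V_{2,3}|=\tfrac{n-1}{2}$. After relabeling, if necessary, so that the positive halves absorb as much of the bridge as possible, set $x$ to be $0$ on the two roots, $+1$ on $V_{1,2}\cup V_{2,2}$ and $-1$ on $V_{1,3}\cup V_{2,3}$. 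The size count makes $(x,\mathbf{1})=0$ and $x^T x=2(n-1)$, so by Theorems 3.1 and 3.2 it suffices to bound $x^T L x/\bigl(2(n-1)\bigr)$.

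The remaining work is to evaluate $x^T L x=\sum_{(a,b)\in E}(x(a)-x(b))^2$. I would split the edge set into the internal edges of $T_{n,1}$, the $k$ bridge edges $e_1,\dots,e_k$, and the internal edges of $T_{n,2}$. Because $x$ is constant on each subtree, the only internal edges that contribute are the four root-to-child edges, supplying $4$ in total. A bridge edge contributes $0$ when it links $V_{1,2}$ to $V_{2,2}$ or $V_{1,3}$ to $V_{2,3}$, contributes $1$ when one endpoint is a root, and contributes $4$ when it links $V_{1,2}$ to $V_{2,3}$ or $V_{1,3}$ to $V_{2,2}$. The key combinatorial observation is that, since the $k$ bridge edges use $k$ distinct vertices in each tree, at most $\tfrac{n-1}{2}$ of them can contribute $4$ (such edges exhaust a whole subtree on one side), and once those slots and the single root slot are saturated the surplus edges are forced to contribute $0$. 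Maximizing the total over admissible configurations then gives $\lambda_2\le\frac{2k+2}{n-1}$ when $k\le n-1$, while in the regime $k>n-1$ the forced zero-cost edges subtract off exactly the term $\frac{-3k+3n-3}{2(n-1)}$; packaging the two regimes with the characteristic function $\chi_{k\ge n-1}$ produces the claimed bound.

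The step I expect to be the main obstacle is precisely this last count: one must verify that the worst-case configuration of bridge endpoints is the one described, and that the $k>n-1$ correction emerges with the stated constants rather than an off-by-one variant, which means pinning down exactly how many bridge edges can simultaneously contribute $4$ while the rest are forced to contribute $0$ or $1$. I also note that the statement writes $m$ where $k$ is intended, since $T_n^{2\times k}$ carries $k$ bridges, and would treat the two as synonymous throughout.
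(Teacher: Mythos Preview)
Your proposal is correct and follows essentially the same approach as the paper: the lower bound comes from $T_n^{2}\preccurlyeq T_n^{2\times k}$ together with Theorem~6.1 at $m=2$, and the upper bound uses the identical test vector ($0$ on the two roots, $\pm 1$ on the opposing subtrees) with the same three-way edge decomposition and the same case analysis on how bridge edges distribute among the $V_{i,j}$. Your observation that the $m$ in the stated bound should be read as $k$ is also in line with the paper, where the symbol $m$ is carried over from the earlier $T_n^m$ computation but plays the role of $k$ here.
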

Now we can construct a graph $T_{n\times l}^2$ which is connected by $l$ identical full binary graphs $T_{n,1},\dots T_{n,l}$ using single edge. For every graph $T_{n,j}$ where $1\le j \le l-1$, there is a vertex $v^i$ which is ancestor of all other vertices in $T_{n,j}$, we connect that with $v^{j+1}$. As an example, we display $T_{7\times 3}^2$:
\begin{center}
\includegraphics[scale = 0.7]{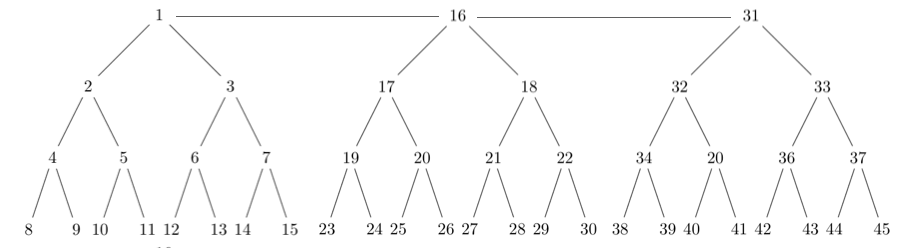}
\end{center}
We have following theorem.

\begin{theorem}
For the graphs $T_{n\times l}^2$ which we described above we have following bound of the second eigenvalues:
$$
\frac{2}{(nl-1)(l\cdot \log_2(n-1)-1)}\le T_{n\times l}^2 \le \frac{l}{n-1}.
$$
\begin{proof}
For the vertex $v^i$ which is ancestor of all other vertices in $T_{n,i}$, we label it as $(j-1)n+1$. Then $(j-1)n+1$ has two children. We label them as $(j-1)n+2$ and $(j-1)n+3$. Then we label children of $(j-1)n+2$ as $(j-1)n+4$ and $(j-1)n+5$, the children of $(j-1)n+3$ as $(j-1)n+6$ and $(j-1)n+7$, and so on until $jn$. 
To get the upper bound we still need to use a test vector. We can set the test vector to be:
$$
\bold{x}(i) = 
\begin{dcases}
0\quad &j\in\{1,\dots l\},(j-1)n+1\\
1 \quad &j\in\{1,\dots l\},(j-1)n+2\\
1 \quad &j\in\{1,\dots l\},(j-1)n+2<i\le jn \text{ and } i  \text{ descendant of } (j-1)n+2\\
-1\quad & \text{otherwise}
\end{dcases}.
$$
We notice that for elements of $T_{n,j}$ where $j\in \{1,\dots l\}$, the number of occurrences of $1$'s in $\bold{x}$ is $\frac{n-1}{2}$ and the number of occurrences of $-1$'s in $\bold{x}$ is $\frac{n-1}{2}$. Hence we have 
\begin{align*}
(\bold{x},\bold{1})&=\sum_{j=1}^{l}\sum_{i=(j-1)n+1}^{jn} x(i)\\
&=l(0)+l\frac{n-1}{2}-l\frac{n-1}{2}\\
&=0.
\end{align*}
We have finished verifying $(\bold{x},\bold{1})=0$. Now we estimate the upper bound of $\lambda_2(T_n^m)$ :
\begin{align*}
\lambda_2(T_{n\times l}^2)&\le \frac{\bold{x}^T\bold{L}\bold{x}}{\bold{x}^T\bold{x}}\\   
&=\frac{\sum_{(a,b)\in E_{T_{n\times l}^{2}}} (x(a)-x(b))^2}{\sum_{i=1}^{nl}x(i)^2}.\\
\end{align*}
We notice that only when $a=(j-1)n+1$ with $b=(j-1)n+2$ or $b=(j-1)n+3$, then the term $(x(a)-x(b))^2$ is not zero. There are $n-1$ vertices in each $T_{n,j}$ such that $x{a}^2=1$. Hence the above equation has the following form:
\begin{align*}
& = \frac{\sum_{j=1}^{l}\sum_{1\le a,b\le nj} (x(a)-x(b))^2}{\sum_{j=1}^{l}\sum_{1\le a\le nj} x(a)^2}\\
& = \frac{2l}{(n-1)l}\\
& = \frac{2}{(n-1)}.
\end{align*} \newline
Now we need to find the lower bound. We still compare our graphs to complete graphs. For every pair of edge $(a,b)\in E_{T_n^m}$, let the path graph $P_{a,b}$ be a path from $a$ to $b$, and $G_{a,b}$ be a graph with a single edge $(a,b)$; then from the lemma, we have $|P_{a,b}|P_{a,b} \succcurlyeq G_{a,b}$. We notice that length of the path $P_{a,b}$ is the longest when $a$ and $b$ where $a\in T_{n,1}$ and $b\in T_{n,l}$, and $a$ and $b$ have no descendants. Hence the length of the path $P_{a,b}$ is at most $l \cdot \log_2{(n-1)}-1$, which means that $|P_{a,b}|\le 2 \log_l(n-1)-1$. Now,
\begin{align*}
G_{a,b}&\preccurlyeq |P_{a,b}|P_{a,b} \\ 
&\preccurlyeq(l \cdot \log_2(n-1)-1)P_{a,b} \\
&\preccurlyeq(l \cdot \log_2(n-1)-1)T_{n\times l}^m.
\end{align*}
Also, we know that complete graph $K_{nl}$ has $\binom{nl}{2}$  edges, so $$K_{nl}\preccurlyeq \sum_{(a,b)\in E_{K_{nl}}}G_{a,b}\preccurlyeq \binom{nl}{2} G_{a,b} \preccurlyeq \binom{nl}{2}(llog_2(n-1)-1) T_m^n.
$$
Hence,
$$
nl=\lambda_2(K_{nl}) \le \binom{nl}{2}(l\cdot \log_2(n-1)-1) \lambda(T_{n\times l}^2).
$$
From the above, we get that $\lambda_2(T_m^n)\geq \frac{2}{(nl-1)(l \log_2(n-1)-1)}$.

\end{proof}
\end{theorem}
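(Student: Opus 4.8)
The plan is to bound $\lambda_2(T_{n\times l}^2)$ from above by a single explicit test vector and from below by comparison with the complete graph $K_{nl}$ through the Loewner order, exactly as in the preceding arguments of this section.

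For the upper bound, first fix the labelling used in the statement: the root $v^j$ of the $j$-th tree $T_{n,j}$ is vertex $(j-1)n+1$, its two children are $(j-1)n+2$ and $(j-1)n+3$, and the rest of $T_{n,j}$ is numbered level by level. Since $T_{n,j}$ is a full binary tree it has $n=2^h-1$ vertices for $h=\log_2(n+1)$; in particular $n$ is odd and each of the two subtrees hanging off the root has exactly $(n-1)/2$ vertices. Choose the test vector $\mathbf{x}$ equal to $0$ on every root $(j-1)n+1$, equal to $+1$ on the subtree rooted at $(j-1)n+2$, and equal to $-1$ on the subtree rooted at $(j-1)n+3$, for all $j\in\{1,\dots,l\}$. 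Each tree then carries $(n-1)/2$ entries equal to $+1$ and $(n-1)/2$ equal to $-1$, so $(\mathbf{x},\mathbf{1})=0$ and $\mathbf{x}$ is admissible for $\lambda_2$. The denominator of the Rayleigh quotient is $\mathbf{x}^T\mathbf{x}=l(n-1)$: the $l$ roots contribute $0$, the remaining $l(n-1)$ vertices contribute $1$ each. For the numerator, apply the quadratic-form identity: inside each $T_{n,j}$ the only edges whose endpoints get different values are the two edges joining the root (value $0$) to its children (values $\pm1$), contributing $2$ per tree, while every bridge edge $v^jv^{j+1}$ joins two roots, both with value $0$, and contributes nothing. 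Hence $\mathbf{x}^TL\mathbf{x}=2l$ and the Rayleigh quotient equals $2l/\bigl(l(n-1)\bigr)=2/(n-1)\le l/(n-1)$, which gives the claimed bound (and in fact the sharper $2/(n-1)$).

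For the lower bound, note first that $T_{n\times l}^2$ is itself a tree: it has $nl$ vertices and $l(n-1)+(l-1)=nl-1$ edges. Hence for any two vertices $a,b$ there is a unique path $P_{a,b}$ in $T_{n\times l}^2$, and $P_{a,b}$ is a subgraph of $T_{n\times l}^2$. By the Path Inequality, if $G_{a,b}$ denotes the single edge $(a,b)$ then $G_{a,b}\preccurlyeq|P_{a,b}|\,P_{a,b}\preccurlyeq|P_{a,b}|\,T_{n\times l}^2$. Now bound $|P_{a,b}|$ uniformly: the longest such path runs between a leaf of $T_{n,1}$ and a leaf of $T_{n,l}$, climbing one tree of height $h-1=\log_2(n+1)-1$, crossing the $l-1$ bridge edges, and descending a second tree, so $|P_{a,b}|\le 2\log_2(n+1)+l-3$. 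Summing the Loewner inequality over all $\binom{nl}{2}$ edges of $K_{nl}$ gives $K_{nl}\preccurlyeq\binom{nl}{2}\bigl(2\log_2(n+1)+l-3\bigr)T_{n\times l}^2$, and monotonicity of eigenvalues under the Loewner order together with $\lambda_2(K_{nl})=nl$ yields a lower bound of the form $\lambda_2(T_{n\times l}^2)\ge \frac{2}{(nl-1)(2\log_2(n+1)+l-3)}$.

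The step I expect to be the main obstacle is precisely this path-length estimate: to recover the clean expression $l\log_2(n-1)-1$ written in the theorem one must verify $2\log_2(n+1)+l-3\le l\log_2(n-1)-1$, which fails for small $n$ (for instance $n=7$, $l=2$), so the argument either has to carry the honest bound $2\log_2(n+1)+l-3$ — still of the same order $\Theta(\log n+l)$ — or impose a mild restriction on $n$. Everything else (checking $(\mathbf{x},\mathbf{1})=0$, the edge bookkeeping for the Rayleigh quotient, the edge count showing $T_{n\times l}^2$ is a tree, and the identity $\lambda_2(K_N)=N$) is routine.
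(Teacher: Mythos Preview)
Your approach is essentially identical to the paper's: the same test vector (zero on each root, $\pm1$ on the two subtrees) yielding the Rayleigh quotient $2/(n-1)$ for the upper bound, and the same Path Inequality/Loewner comparison with $K_{nl}$ for the lower bound.

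The one substantive difference is that your diameter estimate is the careful one. The paper also writes the longest path as running from a leaf of $T_{n,1}$ to a leaf of $T_{n,l}$ but records its length as $l\cdot\log_2(n-1)-1$ without derivation; your count $2(\log_2(n+1)-1)+(l-1)=2\log_2(n+1)+l-3$ is the correct value for the construction described (roots joined consecutively by $l-1$ bridge edges), and your observation that it does \emph{not} in general satisfy $2\log_2(n+1)+l-3\le l\log_2(n-1)-1$ is right. So your ``main obstacle'' is not a gap in your argument but an arithmetic slip in the paper's stated bound; carrying your honest diameter through the Loewner comparison gives the correct lower bound $\lambda_2(T_{n\times l}^2)\ge 2/\bigl((nl-1)(2\log_2(n+1)+l-3)\bigr)$, of the same order as the paper's expression.
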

We have finished discussing the graph $T_{n\times l}^2$. Now we will discuss a more general case when a graph $B_{n\times l}^2$ which is connected by $l$ identical graphs $G_{n,1},\dots G_{n,l}$ using a single edge. Graphs $G_{n,1},\dots G_{n,l}$ are all identical, but they can be any arbitrary graph. We have following theorem.
\begin{theorem}
For graphs of the form $B_{n\times l}^2$, which we described above, we have following bound on the second eigenvalue:
$$
0<\lambda_2(B_{n\times l}^2)\le 2
$$
\end{theorem}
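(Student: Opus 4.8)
The plan is to prove the two inequalities separately, exactly as in the earlier bridge‑graph estimates for $D_n^{2\times k}$ and $B_n^{2\times k}$. The lower bound needs no work: each of the $l$ identical blocks $G_{n,1},\dots,G_{n,l}$ is connected, and consecutive blocks are joined by a bridge edge, so $B_{n\times l}^2$ is a connected graph; by Theorem 3.4 this forces $\lambda_2(B_{n\times l}^2)>0$.

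For the upper bound I would use a test vector $\mathbf{x}$ that is \emph{constant on each block} $G_{n,j}$. The point of this choice is that in the quadratic form $\mathbf{x}^T L\mathbf{x}=\sum_{(a,b)\in E}(x(a)-x(b))^2$ of Theorem 3.1 every intra‑block edge then contributes $0$, so only the $l-1$ bridge edges survive; this is also precisely why the (otherwise arbitrary) internal structure of the $G_{n,j}$ is irrelevant. Since the blocks are arranged in the linear chain $G_{n,1}-G_{n,2}-\dots-G_{n,l}$, cutting the chain between block $j$ and block $j+1$ removes exactly one edge. I would split on the parity of $l$: if $l$ is even, set $\mathbf{x}\equiv 1$ on the first $l/2$ blocks and $\mathbf{x}\equiv -1$ on the last $l/2$ blocks; if $l$ is odd, set $\mathbf{x}\equiv 1$ on the first $(l-1)/2$ blocks, $\mathbf{x}\equiv -1$ on the last $(l-1)/2$ blocks, and $\mathbf{x}\equiv 0$ on the middle block. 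In both cases the $+1$ and $-1$ regions contain equally many of the $nl$ vertices, so $(\mathbf{x},\mathbf{1})=0$; since $\phi_1=\mathbf{1}$ by Theorem 3.3, the variational characterization (Theorem 3.2) yields $\lambda_2(B_{n\times l}^2)\le \mathbf{x}^T L\mathbf{x}/\mathbf{x}^T\mathbf{x}$.

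It then remains to evaluate the Rayleigh quotient. In the even case only the single bridge edge between block $l/2$ and block $l/2+1$ contributes, namely $(1-(-1))^2=4$, and $\mathbf{x}^T\mathbf{x}=nl$, so $\lambda_2\le 4/(nl)$. In the odd case only the two bridge edges incident to the middle block contribute, each equal to $1$, and $\mathbf{x}^T\mathbf{x}=n(l-1)$, so $\lambda_2\le 2/\bigl(n(l-1)\bigr)$. As $l\ge 2$ and $n\ge 1$, both bounds are at most $2$, giving $\lambda_2(B_{n\times l}^2)\le 2$; moreover the bound is sharp, since for $n=1$ and $l=2$ the graph is $K_2$, which has $\lambda_2=2$.

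I do not expect a real obstacle here: the only delicate point is the parity of $l$, which is what forces the odd case to carry a zero‑valued middle block so that $\mathbf{x}$ stays orthogonal to $\mathbf{1}$. The one thing to verify carefully is that the chain has exactly one edge between consecutive blocks and none between non‑consecutive blocks, so that the cut induced by $\mathbf{x}$ really consists of $1$ edge (even $l$) or $2$ edges (odd $l$), as claimed.
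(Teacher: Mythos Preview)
Your proof is correct, and the lower bound is handled exactly as in the paper. The upper bound, however, is obtained by a genuinely different (and in fact stronger) argument.

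The paper removes the \emph{first} bridge edge $(n,n+1)$ to produce a disconnected graph $\tilde B_{n\times l}^2$ with $\lambda_2(\tilde B)=0$, takes the corresponding null eigenvector (constant on each of the two components) as a test vector, and then bounds the leftover single-edge term $\dfrac{(x(n)-x(n+1))^2}{\mathbf{x}^T\mathbf{x}}$ by $2$ via the algebraic inequality $x(n)^2+x(n+1)^2\ge\tfrac12(x(n)-x(n+1))^2$. You instead cut the chain in the \emph{middle}, write down an explicit block-constant $\pm1$ (or $\pm1,0$) vector, and compute the Rayleigh quotient exactly, obtaining $4/(nl)$ for even $l$ and $2/(n(l-1))$ for odd $l$, both of which are $\le 2$. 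Your route is more elementary and yields a sharper intermediate bound (decaying like $1/(nl)$ rather than the flat $2$); the paper's route has the virtue that its algebraic step bounds the contribution of \emph{any} single cut edge by $2$ uniformly, without needing the parity split or the balanced cut. Your remark on sharpness at $n=1$, $l=2$ matches the paper's Remark following the theorem.
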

\begin{proof}
Now we need to label graph $B_{n\times l}^2$ first. For graph $G_{n,1}$, we know that there is a vertex which is attached to another vertex in graph $G_{n,2}$; we label this vertex as $n$. All other vertices in the graph $G_{n,1}$ can be labeled from $1$ to $n-1$ without repeating. For graph $G_{n,l}$, we know that there is a vertex which is attached to another vertex in graph $G_{n,l-1}$; we label this vertex as $n(l-1)+1$. All other vertices in the graph $G_{n,l}$ can be labeled from $n(l-1)+2$ to $nl$ without repeating. For every graph $G_{n,i}$ where $2\le i \le l-1$, we know that there is a vertex which is attached to another vertex in graph $G_{n,i-1}$. We label this vertex as $n(i-1)+1$, and there is a vertex which is attached to another vertex in graph $G_{n,i+1}$. We label that vertex as $ni$. All other vertices in the graph $G_{n,i}$ can be labeled from $n(i-1)+2$ to $ni-1$ without repeating.\\

We notice that $B_{n\times l}^2$ is connected. Hence we have 
$$
\lambda_2(B_{n\times l}^2)>0.
$$
Also, we notice that if we take away an edge $(n,n+1)$ from graph $B_{n\times l}^2,$ then we get a new graph $\tilde{B}_{n\times l}^2=B_{n\times l}^2\setminus(n,n-1),$ which is not connected. Thus, we have $\lambda_2({\tilde{B}_{n\times l}^2})=0$\\
From Theorem 3.2, we know that 
$$
0=\lambda_2(B_{n\times l}^2)=\min\limits_{\substack{(\bold{x},\bold{1})=0,x\in R }}{\frac{\bold{x}^T \bold{L_{B_{n\times l}^2}}\bold{x}}{\bold{x}^T\bold{x}}}
$$
and
$$
\lambda_2({\tilde{B}_{n\times l}^2})=\min\limits_{\substack{(\bold{x},\bold{1})=0,x\in R }}{\frac{\bold{x}^T \bold{L_{\tilde{B}_{n\times l}^2}}\bold{x}}{\bold{x}^T\bold{x}}}.
$$
Notice that 
\begin{align*}
\lambda_2(B_{n\times l}^2)&=\min\limits_{\substack{(\bold{x},\bold{1})=0,x\in R }}{\frac{\bold{x}^T \bold{L_{B_{n\times l}^2}}\bold{x}}{\bold{x}^T\bold{x}}}\\
&=\min\limits_{\substack{(\bold{x},\bold{1})=0,x\in R }}\, \frac{1}{\bold{x}^T\bold{x}}\sum\limits_{(a,b)\in B_{n\times l}^2}(x(a)-x(b))^2\\
&\le \min\limits_{\substack{(\bold{x},\bold{1})=0, x\in R }}\, \left[\sum\limits_{\substack{(a,b)\in B_{n\times l}^2, \\ (a,b)\neq (n,n+1)}}\frac{(x(a)-x(b))^2}{{\bold{x}^T\bold{x}}}+\frac{(x(n)-x(n+1))^2}{{\bold{x}^T\bold{x}}}\right]\\ 
&= \min\limits_{\substack{(\bold{x},\bold{1})=0,x\in R }}\left[\sum\limits_{(a,b)\in \tilde{B}_{n\times l}^2}\frac{(x(a)-x(b))^2}{{\bold{x}^T\bold{x}}}+\frac{(x(n)-x(n+1))^2}{{\bold{x}^T\bold{x}}}\right]\\
&= \min\limits_{\substack{(\bold{x},\bold{1})=0,x\in R }}\left[\sum\limits_{(a,b)\in \tilde{B}_{n\times l}^2}\frac{(x(a)-x(b))^2}{{\bold{x}^T\bold{x}}} +\frac{(x(n)-x(n+1))^2}{{\bold{x}^T\bold{x}}}\right] \\
&=  \min\limits_{\substack{(\bold{x},\bold{1})=0,x\in R }} (I_1 + I_2)
\end{align*}
where 
$$I_1 = \sum\limits_{(a,b)\in \tilde{G}_{n\times l}^2}\frac{(x(a)-x(b))^2}{{\bold{x}^T\bold{x}}} \text{ and } I_2 = \frac{(x(n)-x(n+1))^2}{{\bold{x}^T\bold{x}}}.$$
Notice that $I_1$ is the same as 
$$
0=\lambda_2(\tilde{B}_{n\times l}^2)=\min\limits_{\substack{(\bold{x},\bold{1})=0,x\in R }}{\frac{\bold{x}^T \bold{L_{\tilde{B}_{n\times l}^2}}\bold{x}}{\bold{x}^T\bold{x}}}=  \frac{1}{\bold{x}^T\bold{x}}\sum\limits_{(a,b)\in \tilde{B}_{n\times l}^2}(x(a)-x(b))^2.
$$
We have the following bound for the denominator of $I_2$:
\begin{align*}
\bold{x}^T\bold{x}&=\sum_{a=1}^{a=nl}(x(a))^2\\
&\geq (x(n))^2+(x(n+1))^2\\
&= \frac{1}{2}(x(n))^2+\frac{1}{2}(x(n+1))^2+\frac{1}{2}(x(n))^2+\frac{1}{2}(x(n+1))^2-x(n)x(n+1)+x(n)x(n+1)\\
&= \frac{1}{2}((x(n))^2+(x(n+1))^2+2x(n)x(n+1))+\frac{1}{2}((x(n))^2+(x(n+1))^2-2x(n)x(n+1))\\
&= \frac{1}{2}(x(n)+x(n+1))^2+\frac{1}{2}(x(n)-x(n+1))^2\\
& \geq \frac{1}{2}(x(n)-x(n+1))^2.
\end{align*}
We now use this to bound the second term.
$$
{\frac{(x(n)-x(n+1))^2}{\bold{x}^T\bold{x}}}\le \frac{(x(n)-x(n+1))^2}{\frac{1}{2}(x(n)-x(n+1))^2}=2.
$$
Adding two terms together gets us
$$
\lambda_2(B_{n\times l}^2)\le 2.
$$
Thus, the result is proven.
\end{proof}
\begin{remark}
The upper bound in the inequality above cannot be improved. We notice that when $G_{1\times 2}^2=P_2,$ we know that the graph $P_2$ is constructed by connecting two identical graphs $G_{1,1}$ together, where $G_{1,1}$ is a single vertex. Then we have $\lambda_2(G_{1\times 2}^2)=2$, so equality is achieved.
\end{remark}
\end{section}
\begin{section}{Conclusion and future work}
Our work from section 4 to section 6 went through various different graphs. We noticed that for the $K_n$ type of graphs $D_n^m$, we have the approximate bound  $\lambda_2(D_n^m)\sim\frac{1}{n}$. We also observe that when $n$ and $m$ increase the second eigenvalues decrease. Similarly, for $D_n^{2\times k}$, we have $\lambda_2(D_n^{2\times k})\sim\frac{1}{n}$, and when $n$ increases the second eigenvalues decrease too. \newline

We noticed that for $S_n$ type graphs, $S_n^m$, we still have $\lambda_2(S_n^m)\sim\frac{1}{n}$. This is expected because star graph is a type of complete bipartite graph $K_{1,n-1}$, and complete graphs can also be complete bipartite graphs depending on the choice of $n$. But the $T_n$ type of bridge graph is different from the first two types of graphs. We noticed the lower bound of the second eigenvalues of  $T_n^m$ and $T_n^{2\times k}$ and $T_{n\times}^{l}$ are all dependent on $\log(n+1)$. Also, the upper bound is still asymptotically dependent on $\frac{1}{n}$. \newline

From the above proofs, we noticed that the test vector method is a very good technique for upper bound of the eigenvalues. This is because theorem 3.2 enables us to find a test vector which is orthogonal to the first eigenvector, and from theorem 3.3 we know that the first eigenvector is $\bold{1}$. It's also important to use theorem 6.4, which bounded general bridge graphs $B_{n\times l}^2$. Now we are curious what will happen if we construct a bridge graph like $B_{n\times \infty}^2$. It will not be the same as the case when $l$ is finite because we can't count the vertices one by one anymore. However, we still want to know if the results are somewhat similar.\newline

Our future work will be constructing infinite bridge graphs. Now we can start from some basic definition and related theorems.
\begin{definition}
An infinite graph $G=(V,E)$ is a graph which has a countably infinite number of vertices. Infinite Bridge graphs $G_{n\times \infty}^m$ are constructed by using path graphs, $P_m$ with $n\geq 2$, and gluing together a countably infinite number of identical finite graphs on each end of the paths. Usually we can find an invertible map from vertex set $V$ to $\mathbb{Q}$. We only discuss unweighted graphs.
\end{definition}
For infinite graphs we cannot use adjacency matrices anymore. Now we are seeking a substitution of matrices to associate our graph with an operator. Definitions of operators related to infinite graphs are mentioned by other authors like  Bojann Mohar\cite{b4} and Dragos M. Cvetokvic\cite{b3}, and Ayadi Hela\cite{b5} also defined the Laplacian Operator. But since we are only interested in infinite bridge graphs, we will use a different definition than other authors. The following is an example formed by attaching a countably infinite number of $K_8$ graphs together:
\begin{center}
\includegraphics[scale = 0.5]{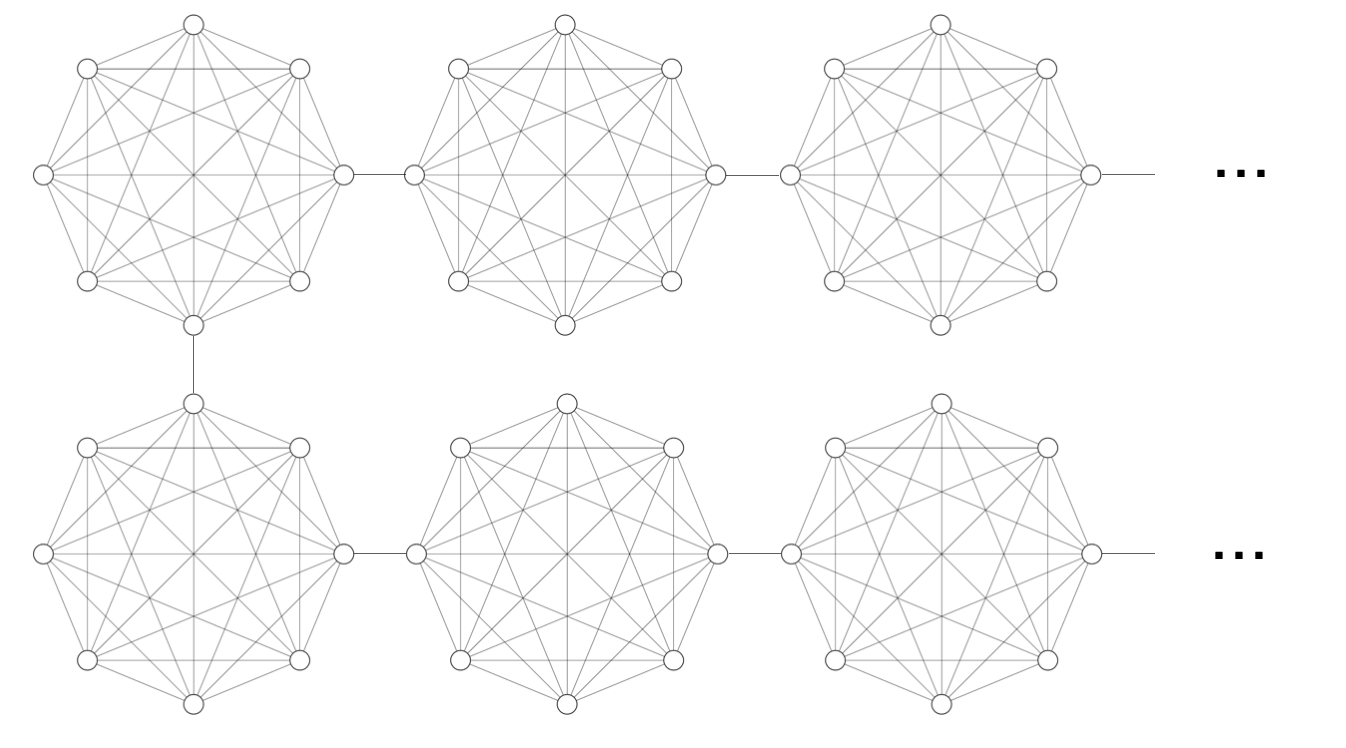}
\end{center}

\begin{definition} The space $\ell^2(\mathbb{N} \times \mathbb{N})$ is defined as the space of sequences $\{x_{i,j}\}_{i.j \in \mathbb{N}}$ such that 
$$\sum_{\mathbb{N} \times \mathbb{N}} |x_{i,j}|^2 < \infty.$$
\end{definition} 

\begin{definition} The adjacency operator $M$ of a weighted graph $G=(V,E)$ is defined as operator with the following entries
$$
M(a,b) = 
\begin{dcases}
1 \quad (a,b)\in E\\
0 \quad (a,b)\notin E.
\end{dcases}
$$ Notice that $\{M(a,b)\}_{a,b \in \mathbb{N}}$ forms a sequence.
\end{definition}
\begin{definition}

The degree operator $D$ of a graph $G=(V,E)$ is a diagonal matrix whose entries are given by
$$
D(a,b) = 
\begin{dcases}
d(a)\quad &a=b\\
0 \quad &a\neq b.
\end{dcases}
$$ Like before, $\{D(a,b)\}_{a,b \in \mathbb{N}}$ forms a sequence.
\end{definition}

\begin{definition}
The graph laplacian operator $L$ of a graph $G$ is defined to be
$$
L =M - D,
$$ where the subtraction operation is subtracting corresponding elements in each sequence.
\end{definition}
\begin{theorem}
The adjacency operator, degree operator and laplacian operator are all linear operators.
\begin{proof}
The proof is straightforward from the definition of the operators.
\end{proof}
\end{theorem}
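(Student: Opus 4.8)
The plan is to reduce the statement to the elementary fact that multiplication by an infinite matrix is linear, taking care that every infinite sum involved is well defined. First I would pin down the setting: after identifying the vertex set $V$ with $\mathbb{N}$, each of $M$, $D$, $L$ acts on a sequence $x = \{x(b)\}_{b\in\mathbb{N}}$ by $(Mx)(a) = \sum_{b} M(a,b)\,x(b)$, $(Dx)(a) = d(a)\,x(a)$, and $(Lx)(a) = (Mx)(a) - (Dx)(a)$, so that the ``entries'' of each operator are exactly the sequences $\{M(a,b)\}$, $\{D(a,b)\}$, $\{L(a,b)\}$ that appear in the definitions. Linearity then means $T(\alpha x + \beta y) = \alpha Tx + \beta Ty$ for $T \in \{M,D,L\}$.

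The one preliminary observation worth isolating is that for an infinite bridge graph assembled from countably many copies of a fixed finite graph joined by paths, the vertex degrees are \emph{uniformly} bounded: if the finite building block has maximum degree $\delta$ and each gluing raises a vertex degree by at most $2$, then $d(a) \le \delta + 2 =: \Delta$ for every $a$. Consequently, for each fixed $a$ the row $\{M(a,b)\}_{b}$ has at most $\Delta$ nonzero entries, so the defining sum $(Mx)(a)$ has only finitely many nonzero summands and is defined for every sequence $x$; $(Dx)(a) = d(a)x(a)$ is trivially defined; hence so is $(Lx)(a)$. I would record this as the reason the operators make sense at all (and, as a bonus not demanded here, it shows $M$, $D$, $L$ are bounded on $\ell^2(V)$, with norms at most $\Delta$, $\Delta$, $2\Delta$).

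With well-definedness in hand, linearity is coordinatewise bookkeeping. Fix scalars $\alpha,\beta$ and sequences $x,y$. For each vertex $a$ we have $(M(\alpha x + \beta y))(a) = \sum_{b} M(a,b)\bigl(\alpha x(b) + \beta y(b)\bigr)$, and since this sum has only finitely many nonzero terms the ordinary distributive law for finite sums applies, giving $\alpha \sum_b M(a,b)x(b) + \beta \sum_b M(a,b) y(b) = \alpha (Mx)(a) + \beta (My)(a)$. The corresponding identity for $D$ is immediate from $(Dx)(a) = d(a)x(a)$, and for $L$ one simply combines the two, using $L(\alpha x+\beta y) = M(\alpha x+\beta y) - D(\alpha x+\beta y) = \alpha Mx + \beta My - \alpha Dx - \beta Dy = \alpha Lx + \beta Ly$. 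Since each of these identities holds at every coordinate $a$, it holds as an identity of sequences, which is exactly the asserted linearity.

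The honest answer to ``where is the difficulty'' is that there is essentially none once the right domain is fixed: the only thing that could go wrong is a convergence or rearrangement issue in the infinite sum defining $Mx$, and that is precisely what the uniform degree bound for bridge graphs rules out. So the main (and really the only) step worth spelling out is the bounded-degree observation; everything after it is the standard argument that an infinite matrix with finitely supported rows induces a linear map, together with the triviality that $M \mapsto M - D$ preserves linearity.
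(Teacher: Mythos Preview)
Your proposal is correct and follows essentially the same approach as the paper, namely direct verification from the definitions; the paper's own proof is literally the single sentence ``The proof is straightforward from the definition of the operators.'' Your version is considerably more explicit---in particular you isolate the bounded-degree observation to guarantee that $(Mx)(a)$ is a finite sum before invoking distributivity---whereas the paper leaves well-definedness implicit here and only addresses the finiteness of neighborhoods in the subsequent theorem.
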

\begin{theorem}
For the laplacian operator  $L_G: \mathcal{X} \rightarrow \mathcal{Y}$ with $\mathcal{X}, \mathcal{Y} \subset \ell^2(\mathbb{N})$, where  $G$ is a bridge graph, $L$ is a well defined mapping.
\begin{proof}
We notice that for $a\in V_G$, we have 
$$Lx(a)=\sum_{b\in N(a)}(x(a)-x(b)).$$
Since we assume that $x\in \ell^2(\mathbb{N})$, we know there is an $M$ such that $(\sum_{a=1}^{\infty}  \|a\|_2)^{\frac{1}{2}}<M$ for some $M > 0$. Since $G$ is a bridge graph, then we know that for every vertex $a$, then $a$ has a finite number of vertices in its neighborhood. Hence we have 
\begin{align*}
\| y\|_2 &=\| L x \|_2\\
&=\left(\sum_{a=1}^{\infty}\sum_{b\in N(a)}(x(a)-x(b))^2\right)^{\frac{1}{2}}\\
&\le \left(\sum_{a=1}^{\infty}m \cdot \max_{b\in N(a)}\left\{\|a\|_2^2,\|b\|_2^2\right\}\right)^{\frac{1}{2}}\\
&\le \left(\sum_{a=1}^{\infty} m^2 \|a\|_2^2\right)^{\frac{1}{2}}\\
&=m\| L  x \|_2 \\
&< mM.
\end{align*}
Hence from the second last line of the above equations we get that Laplacian operator $L_G$ is a bounded operator and $y\in \ell^2(\mathbb{N})$.
\end{proof}
\end{theorem}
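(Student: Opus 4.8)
The plan is to prove something slightly stronger than the bare statement: that $L_G$ is a \emph{bounded} linear operator on $\ell^2(\mathbb{N})$, which in particular shows it is a well-defined map into $\ell^2(\mathbb{N})$. The only structural input needed is that a bridge graph has \emph{uniformly bounded degree}. Indeed, $G$ is assembled by gluing countably many copies of one fixed finite graph $H$ on $n$ vertices along a path, so each vertex of $G$ lies in at most two copies of $H$ and acquires at most two extra incident edges from the connecting path; hence there is a constant $\Delta = \Delta(H) < \infty$ with $d(a) \le \Delta$ for every $a \in V_G$. I would isolate this as the first step, since without it the statement is false.

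Granting the degree bound, the argument has two parts. First, for $x \in \ell^2(\mathbb{N})$ and a vertex $a$, the expression $(L_G x)(a) = \sum_{b \in N(a)}(x(a) - x(b))$ is a sum over the \emph{finite} set $N(a)$, so it defines a real number at every coordinate. Second, I would bound $\|L_G x\|_2$. Applying Cauchy--Schwarz to the finite inner sum,
\begin{align*}
|(L_G x)(a)|^2 = \Big(\sum_{b \in N(a)}(x(a)-x(b))\Big)^2 \le d(a) \sum_{b \in N(a)}(x(a)-x(b))^2 \le \Delta \sum_{b \in N(a)}(x(a)-x(b))^2,
\end{align*}
then summing over $a$, using $(x(a)-x(b))^2 \le 2x(a)^2 + 2x(b)^2$, and invoking Tonelli's theorem to reorder the (nonnegative) double sum,
\begin{align*}
\|L_G x\|_2^2 \le \Delta \sum_{a} \sum_{b \in N(a)} 2\big(x(a)^2 + x(b)^2\big) = 2\Delta\Big(\sum_a d(a)x(a)^2 + \sum_b d(b)x(b)^2\Big) \le 4\Delta^2 \|x\|_2^2.
\end{align*}
Thus $\|L_G x\|_2 \le 2\Delta\|x\|_2 < \infty$, so $L_G x \in \ell^2(\mathbb{N})$; combined with the linearity already recorded in Theorem 6.5, this gives that $L_G$ is a bounded linear operator, in particular well defined.

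I expect the only points requiring care to be (i) extracting the uniform bound $\Delta$ cleanly from whatever precise definition of ``bridge graph'' one adopts --- for the finite families treated earlier this is transparent, but the argument should name $\Delta$ explicitly --- and (ii) justifying the interchange of summation order, which is legitimate here purely because every summand is nonnegative (Tonelli), so no absolute-convergence hypothesis beyond $x \in \ell^2$ is needed. Neither is deep, but both are places where a hand-waved version of the proof can quietly become circular, so I would make them explicit.
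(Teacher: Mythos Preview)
Your argument is correct and follows the same overall strategy as the paper's: exploit the bounded-degree structure of a bridge graph to show $L_G$ is a bounded operator on $\ell^2(\mathbb{N})$, hence well defined. Your version is more careful in two places where the paper is loose. First, you explicitly extract a \emph{uniform} degree bound $\Delta$ from the bridge-graph construction, whereas the paper only asserts that each vertex has finitely many neighbors and then silently introduces a constant $m$ that functions as a uniform bound without ever saying so; as you note, mere local finiteness is not enough. Second, the paper's displayed chain writes $\|Lx\|_2$ as $\bigl(\sum_a \sum_{b\in N(a)}(x(a)-x(b))^2\bigr)^{1/2}$, which conflates $\bigl(\sum_b \cdot\bigr)^2$ with $\sum_b(\cdot)^2$; your Cauchy--Schwarz step is precisely what bridges that gap legitimately. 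The Tonelli justification for reordering the nonnegative double sum is likewise absent from the paper. So: same route, but your execution supplies the details the paper elides.
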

The following definitions are from Elias M. Stein and Rami Shakarchi\cite{b2}.
\begin{definition}
It is well known that $\ell^2(\mathbb{N})$ is a Hilbert space. Therefore, it admits an inner product. For vector $x,y\in \ell^2(\mathbb{N})$, the inner product of $x$ and $y$ is defined as 
$$(x,y)= \sum_{n \in \mathbb{N}} x_n y_n.$$
\end{definition}
\begin{definition}
For a graph operator $T$, if we have
$$
T\phi=\lambda \phi,
$$
then we call $\lambda$ the eigenvalue and $\phi$ the eigenvector corresponding to eigenvalue $\lambda$ for operator $T$.
\end{definition}
\begin{definition}
We say $\lambda\in \mathbb{R}$ is in the spectrum of $A$ if $A-\lambda I$ has no bounded inverse. The spectrum is denoted by $\sigma({A})$ where $\sigma({A}) \subset \mathbb{R}$, and the resolvent set for $A$ is $\rho({A})=\mathbb{R}\setminus\sigma({S})$.
\end{definition}
Our future work will be about the spectrum of Laplacian operator.
\end{section}
\newpage

\end{document}